\renewcommand{\orcid}[1]{\href{https://orcid.org/#1}{\textcolor[HTML]{A6CE39}{orcid.org/#1}}}
\setlist[enumerate]{leftmargin=.5in}
\setlist[itemize]{leftmargin=.5in}
\crefname{hypothesis}{Hypothesis}{Hypotheses}
\title{Towards stability results for global radial basis function based quadrature formulas\thanks{\monthyeardate\today \corresponding{Jan Glaubitz (\email{Jan.Glaubitz@Dartmouth.edu}, \orcid{0000-0002-3434-5563})}
\disclaimer{The views expressed in this academic research paper are those of the authors and do not reflect the official policy or position of the United States Government or Department of Defense.  In accordance with the Air Force Instruction 51-303, it is not copyrighted, but is the property of the United States government.}
}}
\author{Jan Glaubitz\thanks{Department of Mathematics, Dartmouth College, Hanover, NH 03755, USA}
\and
Jonah Reeger\thanks{Air Force Institute of Technology, Wright--Patterson Air Force Base, OH 45433, USA.}
}
\DeclareMathOperator{\diag}{diag}
\DeclareMathOperator*{\argmin}{arg\,min}
\DeclareMathOperator{\arcsinh}{arcsinh}
\newcommand{\scp}[2]{\left\langle{#1,\, #2}\right\rangle}
\newcommand{\intd}{\, \mathrm{d}}
\newcommand{\N}{\mathbb{N}}
\newcommand{\R}{\mathbb{R}}
\begin{document}

\maketitle

\begin{abstract}
Quadrature formulas (QFs) based on radial basis functions (RBFs) have become an essential tool for multivariate numerical integration of scattered data. 
Although numerous works have been published on RBF-QFs, their stability theory can still be considered as underdeveloped. 
Here, we strive to pave the way towards a more mature stability theory for global and function-independent RBF-QFs. 
In particular, we prove stability of these for compactly supported RBFs under certain conditions on the shape parameter and the data points. 
As an alternative to changing the shape parameter, we demonstrate how the least-squares approach can be used to construct stable RBF-QFs by allowing the number of data points used for numerical integration to be larger than the number of centers used to generate the RBF approximation space. 
Moreover, it is shown that asymptotic stability of many global RBF-QFs is independent of polynomial terms, which are often included in RBF approximations. 
While our findings provide some novel conditions for stability of global RBF-QFs, the present work also demonstrates that there are still many gaps to fill in future investigations. 
\end{abstract}

\begin{keywords}
  Numerical integration, radial basis functions, stability, cardinal functions, discrete orthogonal polynomials
\end{keywords}

\begin{AMS}
	65D30, 65D32, 65D05, 42C05
\end{AMS}

\section{Introduction} 
\label{sec:introduction} 

Numerical integration is an omnipresent task in mathematics and myriad applications. 
While these are too numerous to list fully, prominent examples include numerical differential equations \cite{hesthaven2007nodal,quarteroni2008numerical,ames2014numerical}, machine learning \cite{murphy2012machine}, finance \cite{glasserman2013monte}, and biology \cite{manly2006randomization}. 
In many cases, the problem can be formulated as follows.  
Let $\Omega \subset \R^D$ be a bounded domain with positive volume, $|\Omega| > 0$.
Given $N$ distinct data pairs $\{ (\mathbf{x}_n,f_n) \}_{n=1}^N \subset \Omega \times \R$ with $f: \Omega \to \R$ and $f_n := f(\mathbf{x}_n)$, the aim is to approximate the weighted integral 
\begin{equation*}\label{eq:I}
  I[f] := \int_\Omega f(\boldsymbol{x}) \omega(\boldsymbol{x}) \intd \boldsymbol{x}
\end{equation*}
by an \emph{$N$-point QF}. 
That is, by a weighted finite sum over the given function of the form 
\begin{equation*}\label{eq:CR}
  C_N[f] = \sum_{n=1}^N w_n f(\mathbf{x}_n).
\end{equation*} 
In higher dimensions, $C_N$ is sometimes referred to as an $N$-point cubature formula. 
The distinct points $\{ \mathbf{x}_n \}_{n=1}^N$ are called \emph{data points} and the $\{ w_n \}_{n=1}^N$ are referred to as \emph{quadrature weights}. 
Many QFs are derived based on the idea to approximate the (unknown) function $f$ and then exactly integrate this approximation \cite{haber1970numerical,stroud1971approximate,engels1980numerical,cools1997constructing,krommer1998computational,cools2003encyclopaedia,krylov2006approximate,davis2007methods,brass2011quadrature,trefethen2017cubature}. 
Arguably, most of the existing QFs have been derived from being exact for polynomials up to a certain degree. 
See \cite{maxwell1877approximate,mysovskikh1980approximation,cools2001cubature,mysovskikh2001cubature,cools2003encyclopaedia,trefethen2021exactness}, in addition to the above references.  

That said, in recent years, QFs based on the exact integration of RBFs have received a growing amount of interest \cite{sommariva2005integration,sommariva2006numerical,sommariva2006meshless,punzi2008meshless,aziz2012numerical,fuselier2014kernel,reeger2016numericalA,reeger2016numericalB,watts2016radial,reeger2018numerical,sommariva2021rbf}. 
The increased use of RBFs for numerical integration and numerical differential equations \cite{kansa1990multiquadrics,iske1996structure,fasshauer1996solving,kansa2000circumventing,larsson2003numerical,iske2003radial,shu2007integrated,fornberg2015solving,flyer2016enhancing,glaubitz2021stabilizing,glaubitz2021towards} seems to be only logical, considering their story of success in the last few decades. 
In fact, since their introduction in Hardy’s work on cartography from 1971 (see \cite{hardy1971multiquadric}), RBFs have become a powerful tool in numerical analysis, including multivariate interpolation and approximation theory  \cite{buhmann2000radial,buhmann2003radial,wendland2004scattered,fasshauer2007meshfree,iske2011scattered,fornberg2015primer}. 
It should also be mentioned that RBF-QF can be connected to (statistical) Bayesian quadrature \cite{o1991bayes,minka2000deriving,briol2019probabilistic,karvonen2019positivity}.
Finally, recent literature in the quadrature area \cite{reeger2016numericalA,reeger2016numericalB,reeger2018numerical,reeger2020approximate} has focused on 'local' RBF-FD-type implementations to reduce computational costs for large node numbers. 
While an extension to such local approaches would be of interest, we restrict ourselves to global RBF methods in this work. 
That said, to reduce the cost of constructing and integrating a global interpolant, a piecewise RBF interpolant could be considered and integrated in a manner similar to the construction of Newton--Cotes formulas. 
Some of our results would easily carry over to this setting, which might be seen as an extreme version (no overlap of nonzero measure) of RBF partition of unity methods \cite{babuvska1997partition,wendland2002fast,fasshauer2007meshfree} or the overlapped RBF-FD methods \cite{shankar2017overlapped}. 

Even though RBF-QFs have been proposed and applied in numerous works, their stability theory can still be considered as under-developed,
especially compared to more traditional---e.\,g\ polynomial based---methods. 
Stability of RBF-QFs was broached, for instance, in \cite{sommariva2005integration,sommariva2006numerical,punzi2008meshless}. 
Further, stability of RBF-QF was discussed in \cite{fuselier2014kernel} for integration on certain manifolds. 
However, to the best of our knowledge, an exhaustive stability theory for RBF-QFs is still missing in the literature. 
In particular, theoretical results providing clear conditions under which stability of RBF-QFs is ensured are rarely encountered, even for global RBF methods. 
%

The present work strives to fill this gap in the RBF literature partially. 
This is done by providing a detailed theoretical and numerical investigation on stability of global RBF-QFs\footnote{Henceforth, we will refer to these as ``RBF-QFs".} for different families of kernels,
including compactly supported and Gaussian RBFs as well as polyharmonic splines (PHS). 
Our analysis resembles classic stability theory for quadratures exact for polynomial spaces. 
In contrast to some existing works (see \cite{cavoretto2022rbfcub} and references therein), we consider RBF approximations with function-independent shape parameters to obtain quadrature formulas that do not have to be recomputed when another function is considered. 

In particular, we report on the following findings. 
(1) We provide a sufficient condition for compactly supported RBFs to yield a provable stable RBF-QF (see Theorem \ref{thm:main} in \S\ref{sec:compact}). 
The result is independent of the degree of the polynomial term that is included in the global RBF interpolant and assumes the data points to come from an equidistributed (space-filling) sequence. 
(2) We demonstrate how the idea of least squares can be employed to construct provable stable RBF-QFs. 
(3) Asymptotic stability of pure RBF-QFs is connected to asymptotic stability of the same RBF-QF but augmented with polynomials of a fixed arbitrary degree. 
Essentially, we can show that for a sufficiently large number of data points, stability of RBF-QFs is independent of the presence of polynomials in the RBF interpolant.  


The rest of this work is organized as follows. 
We collect some preliminaries on RBF interpolants and QFs in \S\ref{sec:prelim}. 
In \S\ref{sec:stability}, a few initial comments on stability of (RBF-)QFs are offered. 
Next, \S\ref{sec:compact} contains our main theoretical result regarding stability of RBF-QFs based on compactly supported kernels. 
\S\ref{sub:LS} demonstrates how the concept of least squares can be used to construct provable stable RBF-QFs. 
Furthermore, it is proven in \S\ref{sec:connection} that, under certain assumptions, asymptotic stability of RBF-QFs is independent of the polynomial terms included in the RBF interpolant. 
Numerical tests in \S\ref{sec:numerical} accompany the previous theoretical findings. 
Finally, concluding thoughts are offered in \S\ref{sec:summary}.
\section{Preliminaries} 
\label{sec:prelim} 

We collect some preliminaries on RBF interpolants (\S\ref{sub:prelim_RBFs}) and RBF-QFs (\S\ref{sub:prelim_CFs}).

\subsection{Radial basis function interpolation}
\label{sub:prelim_RBFs} 

RBFs are often considered a powerful tool in numerical analysis, including multivariate interpolation and approximation theory \cite{buhmann2000radial,buhmann2003radial,wendland2004scattered,fasshauer2007meshfree,iske2011scattered,fornberg2015primer}.
We are especially interested in RBF interpolants. 
Let $f: \R^D \supset \Omega \to \R$ be a scalar valued function. 
Given a set of distinct \emph{data points} (sometimes also referred to as \emph{centers}), the \emph{RBF interpolant} of $f$ is of the form 
\begin{equation}\label{eq:RBF-interpol}
  (s_{N,d}f)(\boldsymbol{x}) 
    = \sum_{n=1}^N \alpha_n \varphi( \varepsilon_{n} \| \boldsymbol{x} - \mathbf{x}_n \|_2 ) + \sum_{k=1}^K \beta_k p_k(\boldsymbol{x}).
\end{equation} 
Here, $\varphi: \R_0^+ \to \R$ is the \emph{RBF} (also called \emph{kernel}), $\{p_k\}_{k=1}^K$ is a basis of the space of algebraic polynomials up to degree $d$, $\mathbb{P}_d(\Omega)$, and the $\varepsilon_{n}$'s are nonnegative shape parameters.\footnote{For polyharmonic splines, it is common practice to not include a shape parameter in \eqref{eq:RBF-interpol}. For simplicity, we still use \eqref{eq:RBF-interpol} and set $\varepsilon_n = 1$, $n=1,\dots,n$, in this case.}
The RBF interpolant \eqref{eq:RBF-interpol} is uniquely determined by the conditions 
\begin{alignat}{2}
	(s_{N,d}f)(\mathbf{x}_n) 
    		& = f(\mathbf{x}_n), \quad 
		&& n=1,\dots,N, \label{eq:interpol_cond} \\ 
  	\sum_{n=1}^N \alpha_n p_{k}(\mathbf{x}_n) 
    		& = 0 , \quad 
		&& k=1,\dots,K. \label{eq:cond2}
\end{alignat} 
Note that \eqref{eq:interpol_cond} and \eqref{eq:cond2} can be reformulated as a linear system for the coefficient vectors $\boldsymbol{\alpha} = [\alpha_1,\dots,\alpha_N]^T$ and $\boldsymbol{\beta} = [\beta_1,\dots,\beta_K]^T$. 
This linear system is given by 
\begin{equation}\label{eq:system} 
	\begin{bmatrix} \Phi & P \\ P^T & 0 \end{bmatrix}
	\begin{bmatrix} \boldsymbol{\alpha} \\ \boldsymbol{\beta} \end{bmatrix} 
	= 
	\begin{bmatrix} \mathbf{f} \\ \mathbf{0} \end{bmatrix}, 
\end{equation} 
where $\mathbf{f} = [f(\mathbf{x}_1),\dots,f(\mathbf{x}_N)]^T$ as well as 
\begin{equation}\label{eq:Phi_P}
	\Phi = 
	\begin{bmatrix} 
		\varphi( \varepsilon_{1} \| \mathbf{x}_1 - \mathbf{x}_1 \|_2 ) & \dots & \varphi( \varepsilon_{N} \| \mathbf{x}_1 - \mathbf{x}_N \|_2 ) \\ 
		\vdots & & \vdots \\ 
		\varphi( \varepsilon_{1} \| \mathbf{x}_N - \mathbf{x}_1 \|_2 ) & \dots & \varphi( \varepsilon_{N} \| \mathbf{x}_N - \mathbf{x}_N \|_2 )
	\end{bmatrix}, 
	\ 
	P = 
	\begin{bmatrix} 
		p_1(\mathbf{x}_1) & \dots & p_K(\mathbf{x}_1) \\ 
		\vdots & & \vdots \\  
		p_1(\mathbf{x}_N) & \dots & p_K(\mathbf{x}_N) 
	\end{bmatrix}.
\end{equation} 
For a constant shape parameter $\varepsilon_{1} = \dots = \varepsilon_{N}$, \eqref{eq:system} is ensured to have a unique solution---corresponding to existence and uniqueness of the RBF interpolant---if the kernel $\varphi$ is conditionally positive definite of order $d$ and the set of data points is $\mathbb{P}_{d}(\Omega)$-unisolvent. 
See, for instance, \cite[Chapter 7]{fasshauer2007meshfree} and \cite[Chapter 3.1]{glaubitz2020shock} or references therein. 
In this work, we shall focus on the popular choices of RBFs listed in Table \ref{tab:RBFs}. 
A more complete list of RBFs and their properties can be found in the monographs \cite{buhmann2003radial,wendland2004scattered,fasshauer2007meshfree,fornberg2015primer} and references therein.

\begin{table}[tb]
  \centering 
  \renewcommand{\arraystretch}{1.3}
  \begin{tabular}{c|c|c|c}
    RBF & $\varphi(r)$ & parameter & order \\ \hline 
    Gaussian & $\exp( -r^2)$ & & 0 \\ 
    Wendland's & $\varphi_{D,k}(r)$, see \cite{wendland1995piecewise} & $D, k \in \N_0$ & 0 \\ 
    Polyharmonic splines & $r^{2k-1}$ & $k \in \N$ & $k$ \\ 
    & $r^{2k} \log r$ & $k \in \N$ & $k+1$ 
  \end{tabular} 
  \caption{Some popular RBFs. 
  The ``order" $k$ of an RBF refers to the RBF being conditionally positive of order $k$.}
  \label{tab:RBFs}
\end{table}

The set of all RBF interpolants \eqref{eq:RBF-interpol} forms an $N$-dimensional linear space, denote by $\mathcal{S}_{N,d}$. 
This space is spanned by the \emph{cardinal functions}  
\begin{equation}\label{eq:cardinal}
  	c_m(\boldsymbol{x}) 
    = \sum_{n=1}^N \alpha_n^{(m)} \varphi( \varepsilon_{n} \| \boldsymbol{x} - \mathbf{x}_n \|_2 ) + \sum_{k=1}^K \beta^{(m)}_k p_k(\boldsymbol{x}), 
    \quad m=1,\dots,N,
\end{equation}
which are uniquely determined by the \emph{cardinal property} 
\begin{equation}\label{eq:cond_cardinal}
  c_m(\mathbf{x}_n) = \delta_{mn} := 
  \begin{cases} 
    1 & \text{if } m=n, \\ 
    0 & \text{otherwise}, 
  \end{cases} 
  \quad m,n=1,\dots,N,
\end{equation}
and condition \eqref{eq:cond2}. 
They provide us with the following representation of the RBF interpolant \eqref{eq:RBF-interpol}: 
\begin{equation*}
	(s_{N,d}f)(\boldsymbol{x}) = \sum_{n=1}^N f(\mathbf{x}_n) c_n(\boldsymbol{x})
\end{equation*} 
This representation is convenient to subsequently derive quadrature weights based on RBFs that are independent of the function $f$.

\subsection{Quadrature formulas based on radial basis functions} 
\label{sub:prelim_CFs} 

A fundamental idea behind many QFs is to first approximate the (unknown) function $f: \Omega \to \R$ based on the given data pairs $\{\mathbf{x}_n,f_n\}_{n=1}^N \subset \Omega \times \R$ and to exactly integrate this approximation. 
In the case of RBF-QFs this approximation is chosen as the RBF interpolant \eqref{eq:RBF-interpol}. 
Hence, the corresponding RBF-QF is defined as 
\begin{equation}\label{eq:RBF-CRs_def}
	C_N[f] := I[s_{N,d}f] = \int_{\Omega} (s_{N,d}f)(\boldsymbol{x}) \omega(\boldsymbol{x}) \intd \boldsymbol{x}. 
\end{equation}
When formulated w.\,r.\,t.\ the cardinal functions $c_n$ we get 
\begin{equation}\label{eq:RBF-CRs}
	C_N[f] = \sum_{n=1}^N w_n f(x_n) 
	\quad \text{with} \quad w_n = I[c_n]. 
\end{equation} 
That is, the RBF quadrature weights $\mathbf{w} = [w_1,\dots,w_N]^T$ are given by the moments corresponding to the cardinal functions. 
This formulation is often preferred over \eqref{eq:RBF-CRs_def} since the weights $\mathbf{w}$ do not have to be recomputed when another function is considered. 
In our implementation, we compute the RBF quadrature weights by solving the linear system 
\begin{equation}\label{eq:LS_weights}
	\underbrace{\begin{bmatrix} \Phi & P \\ P^T & 0 \end{bmatrix}}_{= A}  
	\begin{bmatrix} \mathbf{w} \\ \mathbf{v} \end{bmatrix} 
	= 
	\begin{bmatrix} \mathbf{m}^{\text{RBF}} \\ \mathbf{m}^{\text{poly}} \end{bmatrix},
\end{equation} 
where $\mathbf{v} \in \R^K$ is a Lagrange multiplier\footnote{The solution of \eqref{eq:LS_weights} can be interpreted as the solution of an equality constrained linear optimization problem \cite{bayona2019insight}, where $\mathbf{v}$ plays the role of a Lagrange multiplier.}. 
Furthermore, the vectors ${\mathbf{m}^{\text{RBF}} \in \R^N}$ and ${\mathbf{m}^{\text{poly}} \in \R^K}$ respectively contain the moments of the translated kernels and polynomial basis functions: 
\begin{equation*} 
\begin{aligned}
	\mathbf{m}^{\text{RBF}} & = \left[ I[\varphi_1], \dots, I[\varphi_N] \right]^T, \\ 
	\mathbf{m}^{\text{poly}} & = \left[ I[p_1], \dots, I[p_K] \right]^T,
\end{aligned}
\end{equation*} 
with $\varphi_n(\boldsymbol{x}) = \varphi( \varepsilon_{n} \| \boldsymbol{x} - \mathbf{x}_n \|_2 )$. 
The moments of different RBFs can be found in the appendix \ref{sec:app_moments}. 
The polynomial moments can be found in the literature, e.\,g., \cite[Appendix A]{glaubitz2020stableCFs} and \cite{folland2001integrate,lasserre2021simple}.
\section{Stability and the Lebesgue constant} 
\label{sec:stability}

This section addresses the stability of RBF interpolants and the corresponding RBF-QFs. 
In particular, we show that both can be estimated in terms of the Lebesgue constant. 
This was also observed in \cite{fuselier2014kernel} for RBF-QFs on certain (compact) manifolds. 
That said, we also demonstrate that RBF-QFs often come with improved stability compared to RBF interpolation.

\subsection{Stability of quadrature formulas} 
\label{sub:stability_CFs}

We shall start by addressing stability of RBF-QFs. 
To this end, let us denote the best approximation of $f$ from $\mathcal{S}_{N,d}$ in the $L^\infty$-norm by $\hat{s}$. 
That is, 
\begin{equation*}\label{eq:Lebesgue}
	\hat{s} = \argmin_{s \in \mathcal{S}_{N,d}} \norm{ f - s }_{L^{\infty}(\Omega)} 
	\quad \text{with} \quad 
	\norm{ f - s }_{L^{\infty}(\Omega)} = \sup_{\mathbf{x} \in \Omega} | f(\mathbf{x}) - s(\mathbf{x}) |. 
\end{equation*} 
Note that this best approximation w.\,r.\,t.\ the $L^\infty$-norm is not necessarily equal to the RBF interpolant. 
Still, the following error bound holds for the RBF-QF \eqref{eq:RBF-CRs}, that corresponds to exactly integrating the RBF interpolant from $\mathcal{S}_{N,d}$: 
\begin{equation}\label{eq:L-inequality} 
\begin{aligned}
	| C_N[f] - I[f] | 
		\leq \left( \| I \|_{\infty} + \| C_N \|_{\infty} \right) \inf_{ s \in \mathcal{S}_{N,d} } \norm{ f - s }_{L^{\infty}(\Omega)}
\end{aligned}
\end{equation} 
Inequality \eqref{eq:L-inequality} is commonly known as the Lebesgue inequality; see, e.\,g., \cite{van2020adaptive} or \cite[Theorem 3.1.1]{brass2011quadrature}. 
It is often encountered in polynomial interpolation \cite{brutman1996lebesgue,ibrahimoglu2016lebesgue} but straightforwardly carries over to numerical integration.
In this context, the operator norms $\| I \|_{\infty}$ and $\|C_N\|_{\infty}$ are respectively given by $\| I \|_{\infty} = I[1]$ and 
\begin{equation*}\label{eq:stab_measure}
	\| C_N \|_{\infty} 
		= \sum_{n=1}^N |w_n| 
		= \sum_{n=1}^N | I[c_n] |.
\end{equation*}
Recall that the $c_n$'s are the cardinal functions (see \S\ref{sub:prelim_RBFs}).
%
In fact, $\| C_N \|_{\infty}$ is a common stability measure for QFs. 
This is because the propagation of input errors, e.\,g., due to noise or rounding errors, can be bounded by $\| C_N \|_{\infty}$: 
Let $\tilde{f}: \Omega \to \R$ be a perturbed version of $f$, e.\,g. including noise or measurement errors, then 
\begin{equation*}
	| C_N[f] - C_N[\tilde{f}] | 
		\leq \| C_N \|_{\infty} \| f - \tilde{f} \|_{L^\infty}.
\end{equation*} 
In other words, input errors are amplified at most by a factor that is equal to the operator norm $\| C_N \|_{\infty}$. 
At the same time, we have 
\begin{equation*}
	\| C_N \|_{\infty} 
		\geq C_N[1],
\end{equation*} 
where equality holds if and only if all quadrature weights are nonnegative. 
Also, for this reason, the construction of QFs is mainly devoted to nonnegative QFs. 

\begin{definition}[Stability]\label{def:stability}
	We call the RBF-QF $C_N$ \emph{stable} if $\| C_N \|_{\infty} = C_N[1]$. 
	This is the case if and only if $I[c_n] \geq 0$ for all cardinal functions $c_n$, $n=1,\dots,N$.
\end{definition}

It is also worth noting that $C_{N}[1] = \| I \|_{\infty}$ if the QF is exact for constants. 
For RBF-QFs, this is the case if at least constants are included in the underlying RBF interpolant ($d \geq 0$).

\subsection{Stability of RBF approximations} 
\label{sub:stability_RBFs}

We now demonstrate how stability of the RBF-QF $C_N$ can be connected to stability of the corresponding RBF interpolant. 
Indeed, the stability measure $\| C_N \|_{\infty}$ can be bounded from above by 
\begin{equation*}
	\| C_N \|_{\infty} 
		\leq \| I \|_{\infty} \Lambda_N, 
		\quad \text{with} \quad 
		\Lambda_N := \sup_{\mathbf{x} \in \Omega} \sum_{n=1}^N | c_n(\mathbf{x}) |.
\end{equation*}
Here, $\Lambda_N$ is the Lebesgue constant corresponding to the recovery process $f \mapsto s_{N,d}f$ (RBF interpolation). 
Obviously, $\Lambda_N \geq 1$. 
Also note that if $1 \in \mathcal{S}_{N,d}$ (the RBF-QF is exact for constants), we observe  
\begin{equation}\label{eq:stab_eq1}
	\| I \|_{\infty} 
		\leq \| C_N \|_{\infty} 
		\leq \| I \|_{\infty} \Lambda_N.
\end{equation} 
Hence, the RBF-QF is stable ($\| C_N \|_{\infty} = \| I \|_{\infty}$) if $\Lambda_N$ is minimal ($\Lambda_N=1$).
We briefly note that the inequality $\| C_N \|_{\infty} \leq \| I \|_{\infty} \Lambda_N$ is sharp by considering the following example. 

\begin{example}[$\|C_N\|_{\infty} = \Lambda_N$]\label{ex:sharp} 
	Let us consider the domain $\Omega = [0,1]$ with $\omega \equiv 1$, which immediately implies $\| I \|_{\infty} = 1$.
	In \cite{bos2008univariate} it was shown that for the linear PHS $\varphi(r) = r$ and data points $0 = x_1 < \dots < x_N = 1$ the corresponding cardinal functions $c_m$ are simple hat functions. 
	In particular, $c_m$ is the ordinary ``connect the dots'' piecewise linear interpolant of the data pairs $(x_n,\delta_{nm})$, $n=1,\dots,N$. 
	Thus, $\Lambda_N = 1$. 
	At the same time, this yields $\|C_N\|_{\infty} = 1$ and therefore  
	$\|C_N\|_{\infty} = \Lambda_N$. 
\end{example} 

Looking for minimal Lebesgue constants is a classical problem in approximation and recovery theory \cite{micchelli1977survey,trefethen2019approximation}. 
For instance, it is well known that for polynomial interpolation, even near-optimal sets of data points yield a Lebesgue constant that grows as $\mathcal{O}(\log N)$ in one dimension and as $\mathcal{O}(\log^2 N)$ in two dimensions; see \cite{brutman1996lebesgue,bos2006bivariate,bos2007bivariate,ibrahimoglu2016lebesgue}. 
In the case of RBF interpolation, the Lebesgue constant and appropriate data point distributions were studied, for instance, in \cite{iske2003approximation,de2003optimal,mehri2007lebesgue,de2010stability}. 
That said, the second inequality in \eqref{eq:stab_eq1} also tells us that in some cases, we can expect the RBF-QF to have superior stability properties compared to the underlying RBF interpolant. 
Finally, it should be stressed that \eqref{eq:stab_eq1} only holds if $1 \in \mathcal{S}_{N,d}$. 
In general, we have 
\begin{equation*}\label{eq:stab_eq2}
	C_N[1]  
		\leq \| C_N \|_{\infty} 
		\leq \| I \|_{\infty} \Lambda_N.
\end{equation*} 
Still, this indicates that a recovery space $\mathcal{S}_{N,d}$ is desired that yields a small Lebesgue constant as well as the RBF-QF potentially having superior stability compared to RBF interpolation.
\section{Compactly supported radial basis functions}
\label{sec:compact}
 
Despite the increased use of RBF-QFs in applications, provable stability results are rarely encountered in the literature. 
As a first step towards a more mature stability theory, we next prove stability of RBF-QFs for compactly supported kernels with nonoverlapping supports. 
To be more precise, we subsequently consider RBFs $\varphi: \R_0^+ \to \R$ satisfying the following restrictions: 
\begin{enumerate}[label=(R\arabic*)] 
	\item \label{item:R1}
	$\varphi$ is nonnegative, i.\,e., $\varphi \geq 0$.
	
	\item \label{item:R2} 
	$\varphi$ is uniformly bounded. 
	W.\,l.\,o.\,g.\ we assume $\max_{r \in \R_0^+} |\varphi(r)| = 1$.
	
	\item \label{item:R3} 
	$\varphi$ is compactly supported. 
	W.\,l.\,o.\,g.\ we assume $\operatorname{supp} \varphi = [0,1]$.
	
\end{enumerate}
Already note that \ref{item:R3} implies $\operatorname{supp} \varphi_n = B_{\varepsilon_{n}^{-1}}(\mathbf{x}_n)$, where 
\begin{equation*}
	B_{\varepsilon_{n}^{-1}}(\mathbf{x}_n) := \{ \, \mathbf{x} \in \Omega \mid \| \mathbf{x}_n - \mathbf{x} \|_2 \leq \varepsilon_{n}^{-1} \, \}, 
	\quad  
	\varphi_n(\boldsymbol{x}) := \varphi( \varepsilon_{n} \| \mathbf{x}_n - \boldsymbol{x} \|_2 ).
\end{equation*}
The $\varphi_n$'s will have nonoverlapping support if the shape parameters $\varepsilon_{n}$ are sufficiently large. 
This can be ensured by the following condition: 
\begin{equation}\label{eq:R4}
	\varepsilon_{n}^{-1} \leq h_{n} 
		:= \min\left\{ \, \| \mathbf{x}_n - \mathbf{x}_m \|_2 \mid \mathbf{x}_m \in X \setminus \{\mathbf{x}_n\} \, \right\}, 
		\quad n=1,\dots,N
\end{equation} 
Here, $X$ denotes the set of data points. 
Finally, it should be pointed out that throughout this section, we assume $\omega \equiv 1$. 
This assumption is made for the main result, Theorem \ref{thm:main}, to hold. 
Its role will become clearer after consulting the proof of Theorem \ref{thm:main} and is revisited in Remark \ref{rem:omega}.

\subsection{Main result}
\label{sub:compact_main}

Our main result is the following Theorem \ref{thm:main}. 
After collecting a few preliminary results, its proof is given in \S\ref{sub:compact_proof}. 

\begin{theorem}\label{thm:main}
	Let $(\mathbf{x}_n)_{n \in \N}$ be an equidistributed sequence in $\Omega$ and $X_N = \{ \mathbf{x}_n \}_{n=1}^N$. 
	Furthermore, let $\omega \equiv 1$, let $\varphi: \R_0^+ \to \R$ be a RBF satisfying \ref{item:R1} to \ref{item:R3}, and choose the shape parameters $\varepsilon_n$ such that the corresponding functions $\varphi_n$ have nonoverlapping support and equal moments ($I[\varphi_n] = I[\varphi_m]$ for all $n,m=1,\dots,N$). 
	For every polynomial degree $d \in \N$ there exists an $N_0 \in \N$ such that for all $N \geq N_0$ the corresponding RBF-QF \eqref{eq:RBF-CRs} is stable. 
	That is, $I[c_m] \geq 0$ for all $m=1,\dots,N$. 
\end{theorem}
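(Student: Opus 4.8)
The plan is to exploit the three structural gifts of the hypotheses---nonoverlapping supports, equal RBF moments, and equidistribution---to collapse the saddle-point system \eqref{eq:LS_weights} for the weights to a purely polynomial least-squares problem whose solution is manifestly positive in the limit. First I would use the nonoverlapping-support assumption together with \ref{item:R3} to observe that $\varphi_n(\mathbf{x}_j) = 0$ whenever $j \neq n$: since the balls $B_{\varepsilon_n^{-1}}(\mathbf{x}_n)$ are pairwise disjoint, each center $\mathbf{x}_j$ lies outside $\operatorname{supp}\varphi_n$ for $n \neq j$. Consequently the kernel matrix collapses to $\Phi = \varphi(0)\,I$, a positive multiple of the identity (with $\varphi(0) > 0$ by \ref{item:R1}--\ref{item:R2}). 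The system \eqref{eq:LS_weights} then has a diagonal $(1,1)$-block and can be solved in closed form: writing $\Pi := P(P^TP)^{-1}P^T$ for the orthogonal projection onto the column space of $P$ (well defined once $P$ has full rank $K$, which holds for $N$ large by $\mathbb{P}_d$-unisolvence), I obtain
\begin{equation*}
  \mathbf{w} = \varphi(0)^{-1}(I - \Pi)\,\mathbf{m}^{\text{RBF}} + P(P^TP)^{-1}\mathbf{m}^{\text{poly}}.
\end{equation*}

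Next I would eliminate the first term entirely. The equal-moment assumption gives $\mathbf{m}^{\text{RBF}} = \mu\,\mathbf{1}$ with $\mu := I[\varphi_1]$, and since the constant function belongs to $\mathbb{P}_d$ we have $\mathbf{1} \in \operatorname{col}(P)$, whence $(I-\Pi)\mathbf{1} = \mathbf{0}$. This reduces the weights to $\mathbf{w} = P(P^TP)^{-1}\mathbf{m}^{\text{poly}}$, i.e.\ $w_m = q_N(\mathbf{x}_m)$, where $q_N \in \mathbb{P}_d$ is the unique polynomial characterized by $\langle q_N, p\rangle_N = I[p]$ for all $p \in \mathbb{P}_d$, using the discrete inner product $\langle u,v\rangle_N := \sum_{n=1}^N u(\mathbf{x}_n)v(\mathbf{x}_n)$. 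In words: the RBF contribution drops out completely, and each weight is simply the point value of a fixed discrete least-squares representative of the integration functional.

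Finally I would invoke equidistribution to pin down $q_N$ asymptotically. Because $(\mathbf{x}_n)$ is equidistributed, $\tfrac{1}{N}\sum_n g(\mathbf{x}_n) \to \tfrac{1}{|\Omega|}\int_\Omega g \intd\boldsymbol{x}$ for every polynomial $g$; applied to the products $p_k p_\ell$ this shows $\tfrac{1}{N}P^TP \to M$, where $M_{k\ell} = \tfrac{1}{|\Omega|}\int_\Omega p_k p_\ell \intd\boldsymbol{x}$ is the invertible mass matrix of $\mathbb{P}_d$. Rewriting the defining relation as $(\tfrac{1}{N}P^TP)(N\mathbf{g}) = \mathbf{m}^{\text{poly}}$ forces $N\mathbf{g} \to M^{-1}\mathbf{m}^{\text{poly}}$, and I would identify the limiting polynomial $q_\infty \in \mathbb{P}_d$ by $\tfrac{1}{|\Omega|}\int_\Omega q_\infty\, p \intd\boldsymbol{x} = I[p]$ for all $p \in \mathbb{P}_d$. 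Using $\omega \equiv 1$, so that the right-hand functional is the $L^2$ pairing with the constant $1 \in \mathbb{P}_d$, this yields $q_\infty \equiv |\Omega|$. Since $\mathbb{P}_d$ is finite-dimensional and $\Omega$ bounded, coefficient convergence upgrades to uniform convergence $N q_N \to |\Omega|$ on $\Omega$, so there is an $N_0$ (depending on $d$ through $\dim\mathbb{P}_d$ and the rate of equidistribution) with $N q_N > |\Omega|/2$ on $\Omega$ for all $N \geq N_0$. In particular $w_m = q_N(\mathbf{x}_m) > 0$ for every $m$, which is stability by Definition \ref{def:stability}.

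The step I expect to be the main obstacle is this last one: promoting the coefficient-level limit to a uniform, sign-definite lower bound that holds simultaneously at all $N$ data points. This is where equidistribution must do genuine work---it has to control the entire degree-$2d$ Gram matrix $\tfrac1N P^TP$, not merely individual moments---and where the hypothesis $\omega \equiv 1$ is indispensable: for a general weight $\omega$ the limit $q_\infty/|\Omega|$ is the $L^2$-projection of $\omega$ onto $\mathbb{P}_d$, which can change sign and thereby destroy positivity of the weights.
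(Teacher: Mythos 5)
Your proof is correct, and its skeleton coincides with the paper's: nonoverlapping supports force $\varphi_n(\mathbf{x}_m)=\delta_{mn}$, the equal-moment hypothesis together with $1\in\mathbb{P}_d(\Omega)$ annihilates the kernel contribution to the weights, and equidistribution drives the surviving polynomial part to a positive constant. The implementation, however, is genuinely different. The paper first derives the explicit cardinal-function representation \eqref{eq:rep_cm} of Lemma \ref{lem:rep_cm} in a basis of discrete orthogonal polynomials, kills the $I[\varphi_m]$ term via the DOP identity \eqref{eq:omega_proof1}, and then invokes the nontrivial imported Lemma \ref{lem:technical} ($L^\infty$-convergence of DOPs to continuous orthogonal polynomials) to obtain the limit \eqref{eq:omega_proof2}. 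You instead solve the saddle-point system \eqref{eq:LS_weights} directly by Schur complement using $\Phi=\varphi(0)I$, arrive at the basis-independent formula $\mathbf{w}=P(P^TP)^{-1}\mathbf{m}^{\text{poly}}$, and then need only convergence of the fixed Gram matrix $\tfrac{1}{N}P^TP$ to the $L^2(\Omega)$ mass matrix, which follows immediately from the definition of equidistribution applied to the finitely many products $p_kp_l$, plus continuity of matrix inversion. This buys two things: the argument is self-contained (no DOP machinery or external convergence lemma), and the uniformity in $m$ is explicit---your uniform bound $Nq_N>|\Omega|/2$ on all of $\Omega$ handles simultaneously all data points, whereas in the paper this uniformity is implicit in the $L^\infty$ statement of Lemma \ref{lem:technical}. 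Your route also exposes a structural fact the paper leaves unstated: under the hypotheses, the interpolatory RBF quadrature weights coincide \emph{exactly} with the discrete least-squares polynomial quadrature weights (the minimum-norm solution of $P^T\mathbf{w}=\mathbf{m}^{\text{poly}}$), which connects Theorem \ref{thm:main} to the least-squares framework of \S\ref{sub:LS}, and your closing observation that for general $\omega$ the limit $q_\infty/|\Omega|$ is the $L^2$-projection of $\omega$ onto $\mathbb{P}_d(\Omega)$ is precisely the content of Remark \ref{rem:omega}. One small shared caveat: your parenthetical claim that \ref{item:R1}--\ref{item:R2} give $\varphi(0)>0$ is not literally forced (the maximum of $\varphi$ need not occur at $r=0$), but the paper's Lemma \ref{lem:rep_cm} tacitly assumes $\varphi(0)=1$ for the same purpose, so this is an innocuous normalization both arguments require rather than a gap in yours.
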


Note that a sequence $(\mathbf{x}_n)_{n \in \N}$ is \emph{equidistributed in $\Omega$} if and only if 
\begin{equation*}
	\lim_{N \to \infty} \frac{|\Omega|}{N} \sum_{n=1}^N g(\mathbf{x}_n) 
		= \int_{\Omega} g(\boldsymbol{x}) \intd \boldsymbol{x}
\end{equation*}
holds for all measurable bounded functions $g: \Omega \to \R$ that are continuous almost everywhere (in the sense of Lebesgue), see \cite{weyl1916gleichverteilung}. 
For details on equidistributed sequences, we refer to the monograph \cite{kuipers2012uniform}.\footnote{
Examples for equidistributed sequences include low-discrepancy points \cite{hlawka1961funktionen,niederreiter1992random,caflisch1998monte,dick2013high} used in quasi-Monte Carlo methods, such as the Halton points \cite{halton1960efficiency}.
}
Still, it should be noted that equidistributed sequences are dense sequences with a special ordering. 
In particular, if $(\mathbf{x}_n)_{n \in \N} \subset \Omega$ is equidistributed, then for every $d \in \N$ there exists an $N_0 \in \N$ such that $X_N$ is $\mathbb{P}_d(\Omega)$-unisolvent for all $N \geq N_0$; see \cite{glaubitz2020constructing}. 
This ensures that the corresponding RBF interpolant is well-defined. 
It should also be noted that if $\Omega \subset \R^D$ is bounded and has a boundary of measure zero (again in the sense of Lebesgue), then an equidistributed sequence in $\Omega$ is induced by an equidistributed sequence in the $D$-dimensional hypercube. 
More details on how an equidistributed sequence in $\Omega$ can be constructed are provided in \cite{glaubitz2020constructing}. 

\begin{remark} 
	It is always possible to ensure the equal moment condition, $I[\varphi_n] = I[\varphi_m]$ for all $n,m=1,\dots,N$, in Theorem \ref{thm:main} by allowing the  points closer to the boundary to come with a smaller shape parameter. 
	In this way, one can compensate for the part of the support cut off by the boundary of the domain $\Omega$.  
	For instance, if equally spaced points are used on $[a,b]$ with $a = x_1 < \dots < x_N = b$, then the shape parameter is $\varepsilon_n = \varepsilon$ for the interior points ($n=2,\dots,N-1$) and $\varepsilon_1 = \varepsilon_N = \varepsilon/2$ for the boundary points, where $\varepsilon$ is a suitable chosen reference parameter. 
	That said, in our numerical tests, we observed Theorem \ref{thm:main} also to hold when the equal moment condition was not satisfied. 
\end{remark}

\begin{remark}
	It is not necessary to include polynomials in the RBF-QF \eqref{eq:RBF-CRs} for Theorem \ref{thm:main} to imply stability. 
	Indeed, it is subsequently proved by Lemma \ref{lem:stab_noPoly} that the RBF-QF \eqref{eq:RBF-CRs} can also be stable when no polynomials are included. 
	Sometimes, the RBF-QF \eqref{eq:RBF-CRs} is also referred to as the ``RBF+poly-QF" when polynomials are included. 
	In this regard, Theorem \ref{thm:main} shows that stability of RBF-QFs carries over to RBF+poly-QFs under the assumptions listed in Theorem \ref{thm:main}. 
	The influence of including polynomials into the RBF-QFs on their stability is also discussed for other kernels in \S\ref{sec:connection}.  
\end{remark}

\subsection{Explicit representation of the cardinal functions}
\label{sub:compact_explicit}

In preparation of proving Theorem \ref{thm:main} we derive an explicit representation for the cardinal functions $c_n$ under the restrictions \ref{item:R1}--\ref{item:R3} and \eqref{eq:R4}. 
In particular, we use the concept of discrete orthogonal polynomials (DOPs). 
Let us define the following discrete inner product corresponding to the data points $X_N = \{\mathbf{x}_n\}_{n=1}^N$: 
\begin{equation}\label{eq:discrete_scp}
	[u,v]_{X_N} = \frac{|\Omega|}{N} \sum_{n=1}^N u(\mathbf{x}_n) v(\mathbf{x}_n)
\end{equation} 
Recall that the data points $X_N$ are coming from an equidistributed sequence and are ensured to be $\mathbb{P}_d(\Omega)$-unisolvent for any degree $d \in \N$ if $N$ is sufficiently large. 
In this case, \eqref{eq:discrete_scp} is positive definite on $\mathbb{P}_d(\Omega)$, i.\,e., $[u,u]_{X_N} > 0$ if $u \in \mathbb{P}_d(\Omega)$ and $u \neq 0$. 
We say that the basis $\{p_k\}_{k=1}^K$ of $\mathbb{P}_d(\Omega)$, where $K = \dim \mathbb{P}_d(\Omega)$, consists of \emph{DOPs} if 
\begin{equation*}
	[p_k,p_l]_{X_N} = \delta_{kl} := 
	\begin{cases} 
		1 & \text{ if } k=l, \\ 
		0 & \text{ otherwise}, 
	\end{cases} 
	\quad k,l=1,\dots,K.
\end{equation*} 
We now come to the desired explicit representation for the cardinal functions $c_m$. 

\begin{lemma}[Explicit representation for $c_m$]\label{lem:rep_cm}
	Let the RBF $\varphi: \R_0^+ \to \R$ satisfy \ref{item:R2} and \ref{item:R3}. 
	Furthermore, choose the shape parameters $\varepsilon_n$ such that the corresponding functions $\varphi_n$ have nonoverlapping support and let the basis $\{p_k\}_{k=1}^K$ consists of DOPs. 
	Then, the cardinal function $c_m$, $m=1,\dots,N$, is given by 
	\begin{equation}\label{eq:rep_cm}
	\begin{aligned}
		c_m(\boldsymbol{x}) 
			= \varphi_m(\boldsymbol{x}) 
			- \frac{|\Omega|}{N} \sum_{n=1}^N \left( \sum_{k=1}^K p_k(\mathbf{x}_m) p_k(\mathbf{x}_n) \right) \varphi_n(\boldsymbol{x}) 
			+ \frac{|\Omega|}{N} \sum_{k=1}^K p_k(\mathbf{x}_m) p_k(\boldsymbol{x}). 
	\end{aligned}
	\end{equation} 
\end{lemma}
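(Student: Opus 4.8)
The plan is to set up the small linear system that determines the coefficients of the cardinal function $c_m$ and then exploit the two structural simplifications that the hypotheses provide: nonoverlapping supports collapse the kernel matrix $\Phi$ to the identity, while the DOP normalization collapses $P^T P$ to a scalar multiple of the identity. Concretely, I would write $c_m$ as in \eqref{eq:cardinal}, namely $c_m = \sum_{n=1}^N \alpha_n^{(m)} \varphi_n + \sum_{k=1}^K \beta_k^{(m)} p_k$, whose coefficient vectors $\boldsymbol{\alpha}^{(m)}, \boldsymbol{\beta}^{(m)}$ are fixed by the cardinal property \eqref{eq:cond_cardinal} together with the side condition \eqref{eq:cond2}. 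In matrix form this is precisely the saddle-point system \eqref{eq:system} with system matrix $\begin{bmatrix}\Phi & P \\ P^T & 0\end{bmatrix}$ and right-hand side $[\mathbf{e}_m^T, \mathbf{0}^T]^T$, where $\mathbf{e}_m$ denotes the $m$-th standard unit vector.

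First I would establish the two matrix identities. By \ref{item:R3} and the nonoverlapping-support assumption, $\mathbf{x}_l \notin \operatorname{supp}\varphi_n$ whenever $l \neq n$, so the off-diagonal entries $\Phi_{ln} = \varphi_n(\mathbf{x}_l)$ from \eqref{eq:Phi_P} vanish, while the diagonal entries equal $\varphi(0) = 1$ by the normalization \ref{item:R2}; hence $\Phi = I_N$. Next, the defining DOP relation $[p_k,p_l]_{X_N} = \delta_{kl}$ with the discrete inner product \eqref{eq:discrete_scp} reads, in matrix notation, $\tfrac{|\Omega|}{N} P^T P = I_K$, i.e. $P^T P = \tfrac{N}{|\Omega|} I_K$.

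With these two identities, solving the block system is routine. The first block row gives $\boldsymbol{\alpha}^{(m)} = \mathbf{e}_m - P\boldsymbol{\beta}^{(m)}$; substituting this into the second block row $P^T\boldsymbol{\alpha}^{(m)} = \mathbf{0}$ and using $P^T P = \tfrac{N}{|\Omega|} I_K$ yields $\boldsymbol{\beta}^{(m)} = \tfrac{|\Omega|}{N} P^T \mathbf{e}_m$, whose $k$-th entry is $\tfrac{|\Omega|}{N} p_k(\mathbf{x}_m)$. Back-substitution then produces $\alpha_n^{(m)} = \delta_{mn} - \tfrac{|\Omega|}{N}\sum_{k=1}^K p_k(\mathbf{x}_m) p_k(\mathbf{x}_n)$. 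Inserting these coefficients into \eqref{eq:cardinal} and splitting off the $n=m$ term in the first sum then reproduces exactly the claimed formula \eqref{eq:rep_cm}.

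I do not expect a genuine obstacle here, as the statement is essentially a linear-algebra computation once the structure is recognized. The only points requiring care are (i) the justification of $\Phi = I_N$, which rests on reading the nonoverlapping-support hypothesis together with \ref{item:R2} and \ref{item:R3}, and (ii) the invertibility of $P^T P$, which is guaranteed because the existence of a DOP basis presupposes $\mathbb{P}_d(\Omega)$-unisolvence of $X_N$, making $P$ of full column rank. Finally, I would confirm that the solution obtained indeed satisfies the side condition \eqref{eq:cond2}, i.e. $P^T\boldsymbol{\alpha}^{(m)} = \mathbf{0}$, which holds by construction of $\boldsymbol{\beta}^{(m)}$.
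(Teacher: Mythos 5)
Your proof is correct and follows essentially the same route as the paper: both exploit $\varphi_n(\mathbf{x}_m) = \delta_{mn}$ (your $\Phi = I_N$, obtained from \ref{item:R2}, \ref{item:R3} and the nonoverlapping supports) to express $\boldsymbol{\alpha}^{(m)}$ in terms of $\boldsymbol{\beta}^{(m)}$, then use the DOP normalization (your $P^T P = \tfrac{N}{|\Omega|} I_K$) to solve for $\boldsymbol{\beta}^{(m)}$ and back-substitute, so your saddle-point formulation is just a matrix repackaging of the paper's componentwise computation. Incidentally, your constants are the internally consistent ones: the paper's proof displays $\beta_l^{(m)} = \tfrac{N}{|\Omega|} p_l(\mathbf{x}_m)$ and carries that factor into $\alpha_n^{(m)}$, a typographical inversion of the correct $\tfrac{|\Omega|}{N} p_l(\mathbf{x}_m)$ that your derivation produces and that matches the stated formula \eqref{eq:rep_cm}.
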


\begin{proof} 
	Let $m,n \in \{1,\dots,N\}$. 
	The restrictions \ref{item:R2}, \ref{item:R3} together with the assumption of the $\varphi_n$'s having nonoverlapping support yields $\varphi_n(\mathbf{x}_m) = \delta_{mn}$.
	Hence, \eqref{eq:cardinal} and \eqref{eq:cond_cardinal} imply 
	\begin{equation}\label{eq:alpha}
		\alpha_n^{(m)} = \delta_{mn} - \sum_{k=1}^K \beta^{(m)}_k p_k(\mathbf{x}_n).
	\end{equation} 
	If we substitute \eqref{eq:alpha} into \eqref{eq:cond2}, we get 
	\begin{equation*} 
		p_l(\mathbf{x}_m) - \frac{N}{|\Omega|} \sum_{k=1}^K \beta^{(m)}_k [p_k,p_l]_{X_N} = 0, 
		\quad l=1,\dots,K. 
	\end{equation*} 
	Thus, if $\{p_k\}_{k=1}^K$ consists of DOPs, this gives us 
	\begin{equation}\label{eq:beta}
		\beta^{(m)}_l = \frac{N}{|\Omega|} p_l(\mathbf{x}_m), \quad l=1,\dots,K.
	\end{equation} 
	Finally, substituting \eqref{eq:beta} into \eqref{eq:alpha} yields 
	\begin{equation*} 
		\alpha_n^{(m)} = \delta_{mn} - \frac{N}{|\Omega|} \sum_{k=1}^K p_k(\mathbf{x}_m) p_k(\mathbf{x}_n), 
	\end{equation*} 
	and therefore the assertion. 
\end{proof}

It should be stressed that using a basis of DOPs is not necessary for implementing RBF-QFs. 
In fact, the quadrature weights are---ignoring computational considerations---independent of the polynomial basis w.\,r.\,t.\ which the matrix $P$ and the corresponding moments $\mathbf{m}^{\text{poly}}$ are formulated. 
We only use DOPs as a theoretical tool to show stability of RBF-QFs.

\subsection{Some low hanging fruits}
\label{sub:compact_low}

Using the explicit representation \eqref{eq:rep_cm} it is trivial to prove stability of RBF-QFs ($I[c_m] \geq 0$ for all $m=1,\dots,N$) when no polynomial term or only a constant is included in the RBF interpolant. 

\begin{lemma}[No polynomials]\label{lem:stab_noPoly}
	Let the RBF $\varphi: \R_0^+ \to \R$ satisfy \ref{item:R1} to \ref{item:R3} and choose the shape parameters $\varepsilon_n$ such that the corresponding functions $\varphi_n$ have nonoverlapping support.
	Assume that no polynomials are included in the corresponding RBF interpolant ($K=0$). 
	Then, the associated RBF-QF is stable.
\end{lemma}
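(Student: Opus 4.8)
The plan is to specialize the explicit representation from Lemma~\ref{lem:rep_cm} to the case $K=0$ and then read off nonnegativity directly. First I would observe that when no polynomial term is present, both sums over $k$ in the representation \eqref{eq:rep_cm} are empty, so it collapses to $c_m(\boldsymbol{x}) = \varphi_m(\boldsymbol{x})$ for every $m=1,\dots,N$. (Equivalently, one can argue from scratch: restrictions \ref{item:R2} and \ref{item:R3}, together with the nonoverlapping-support assumption, give $\varphi_n(\mathbf{x}_j) = \delta_{nj}$, so that the interpolation matrix $\Phi$ equals the identity; the cardinal conditions \eqref{eq:cond_cardinal} then force the coefficients $\alpha_n^{(m)} = \delta_{mn}$, and hence $c_m = \varphi_m$.)

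Next I would integrate. Since $\varphi \geq 0$ by \ref{item:R1}, each translated kernel $\varphi_m$ is nonnegative on $\Omega$, and because $\omega \equiv 1$ throughout this section the corresponding moment satisfies
\begin{equation*}
	I[c_m] = I[\varphi_m] = \int_\Omega \varphi_m(\boldsymbol{x}) \intd \boldsymbol{x} \geq 0,
	\quad m=1,\dots,N.
\end{equation*}
By Definition~\ref{def:stability}, the condition $I[c_m] \geq 0$ for all cardinal functions is precisely the statement of stability, which completes the argument.

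There is essentially no obstacle here---this is the promised ``low hanging fruit''. The only point deserving care is that the conclusion hinges on the weight being nonnegative (here $\omega \equiv 1$): it is this, in combination with \ref{item:R1}, that converts a nonnegative integrand into a nonnegative moment. I would also note in passing that the nonoverlapping supports guarantee $\Phi = \mathrm{Id}$, so the cardinal functions are well defined even without a polynomial block, which legitimizes the specialization of Lemma~\ref{lem:rep_cm} to $K=0$ in the first step.
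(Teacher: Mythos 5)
Your proposal is correct and follows essentially the same route as the paper, whose proof simply notes that $c_m = \varphi_m$ (the $K=0$ specialization of Lemma~\ref{lem:rep_cm}) and invokes \ref{item:R1} to conclude $I[c_m] \geq 0$. Your added remarks---that $\Phi = \mathrm{Id}$ under the nonoverlap assumption and that nonnegativity of the weight is what turns a nonnegative integrand into a nonnegative moment---are accurate elaborations of steps the paper leaves implicit, not a different argument.
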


\begin{proof} 
	It is obvious that $c_m(\boldsymbol{x}) = \varphi_m(\boldsymbol{x})$. 
	Thus, by restriction \ref{item:R1}, $c_m$ is nonnegative and therefore $I[c_m] \geq 0$.
\end{proof}

\begin{lemma}[Only a constant]
	Let the RBF $\varphi: \R_0^+ \to \R$ satisfy \ref{item:R1} to \ref{item:R3} and choose the shape parameters $\varepsilon_n$ such that the corresponding functions $\varphi_n$ have nonoverlapping support.
	Assume that only a constant is included in the corresponding RBF interpolant ($K=1$). 
	Then, the associated RBF-QF is stable.
\end{lemma}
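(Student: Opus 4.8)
The plan is to specialize the explicit representation from Lemma~\ref{lem:rep_cm} to the case $K=1$ and then apply the integral functional. Since the single DOP basis element must be the constant normalized with respect to the discrete inner product \eqref{eq:discrete_scp}, I would first observe that $p_1 \equiv |\Omega|^{-1/2}$: indeed, for a constant $p_1 \equiv c$ the orthonormality condition reads $[p_1,p_1]_{X_N} = \frac{|\Omega|}{N}\sum_{n=1}^N c^2 = |\Omega|\,c^2 = 1$, which forces $c = |\Omega|^{-1/2}$. Substituting $\sum_{k=1}^K p_k(\mathbf{x}_m)p_k(\mathbf{x}_n) = p_1(\mathbf{x}_m)p_1(\mathbf{x}_n) = |\Omega|^{-1}$ into \eqref{eq:rep_cm} collapses the representation to
\begin{equation*}
	c_m(\boldsymbol{x}) = \varphi_m(\boldsymbol{x}) - \frac{1}{N}\sum_{n=1}^N \varphi_n(\boldsymbol{x}) + \frac{1}{N},
\end{equation*}
where the constant last term arises from $\frac{|\Omega|}{N}\,p_1(\mathbf{x}_m)p_1(\boldsymbol{x}) = \frac{|\Omega|}{N}\cdot|\Omega|^{-1} = \frac{1}{N}$.

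Next I would apply $I$ and invoke $\omega \equiv 1$ (assumed throughout \S\ref{sec:compact}), so that $I[1] = |\Omega|$. Writing $M_n := I[\varphi_n]$, linearity of $I$ then yields
\begin{equation*}
	I[c_m] = M_m - \frac{1}{N}\sum_{n=1}^N M_n + \frac{|\Omega|}{N}.
\end{equation*}
The crux is the estimate $\sum_{n=1}^N M_n \leq |\Omega|$. To establish it, I would use that the supports of the $\varphi_n$ are (essentially) disjoint, so at most one of them is nonzero at any given $\boldsymbol{x}\in\Omega$ up to a set of measure zero, while \ref{item:R2} bounds each by $1$; hence $\sum_{n=1}^N \varphi_n(\boldsymbol{x}) \leq 1$ for almost every $\boldsymbol{x}$. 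Integrating over $\Omega$ gives $\sum_{n=1}^N M_n = \int_\Omega \sum_{n=1}^N \varphi_n(\boldsymbol{x}) \intd \boldsymbol{x} \leq |\Omega|$.

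Consequently $\frac{|\Omega|}{N} - \frac{1}{N}\sum_{n=1}^N M_n \geq 0$, so that $I[c_m] \geq M_m = I[\varphi_m]$, which is nonnegative by \ref{item:R1}. This proves $I[c_m] \geq 0$ for every $m=1,\dots,N$, i.e.\ stability in the sense of Definition~\ref{def:stability}. I expect the only nonroutine point to be the pointwise bound $\sum_{n=1}^N \varphi_n \leq 1$ and its integrated form; everything else is algebraic simplification of \eqref{eq:rep_cm} together with linearity of $I$. The handling of overlaps on measure-zero boundary sets of the supports should require only a brief remark rather than any substantial argument.
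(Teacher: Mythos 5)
Your proposal is correct and follows essentially the same route as the paper: both specialize the representation of Lemma~\ref{lem:rep_cm} with $p_1 \equiv |\Omega|^{-1/2}$ to obtain $c_m = \varphi_m + \frac{1}{N}\bigl(1 - \sum_{n=1}^N \varphi_n\bigr)$ and both rest on the bound $\sum_{n=1}^N \varphi_n \leq 1$ coming from \ref{item:R2}, \ref{item:R3}, and the nonoverlapping supports. The only cosmetic difference is that you integrate first and compare moments, while the paper notes the pointwise inequality $c_m(\boldsymbol{x}) \geq \varphi_m(\boldsymbol{x})$ and then applies \ref{item:R1}; these are trivially equivalent.
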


\begin{proof} 
	Let $m \in \{1,\dots,N\}$. 
	If we choose $p_1 \equiv |\Omega|^{-1/2}$, Lemma \ref{lem:rep_cm} yields 
	\begin{equation*} 
		c_m(\boldsymbol{x}) 
			= \varphi_m(\boldsymbol{x}) 
			+ \frac{1}{N} \left( 1 - \sum_{n=1}^N \varphi_n(\boldsymbol{x}) \right). 
	\end{equation*} 
	Note that by \ref{item:R2}, \ref{item:R3}, and \eqref{eq:R4}, we therefore have $c_m(\boldsymbol{x}) \geq \varphi_m(\boldsymbol{x})$. 
	Hence, \ref{item:R1} implies the assertion. 
\end{proof}

\subsection{Proof of the main results}
\label{sub:compact_proof} 

The following technical Lemma will be convenient to the proof of Theorem \ref{thm:main}. 

\begin{lemma}\label{lem:technical}
	Let $(\mathbf{x}_n)_{n \in \N}$ be equidistributed in $\Omega$, $X_N = \{ \mathbf{x}_n \}_{n=1}^N$, and let $[\cdot,\cdot]_{X_N}$ be the discrete inner product \eqref{eq:discrete_scp}. 
	Furthermore, let $\{ p_k^{(N)} \}_{k=1}^K$ be a basis of $\mathbb{P}_d(\Omega)$ consisting of DOPs w.\,r.\,t.\ $[\cdot,\cdot]_{X_N}$. 
	Then, for all $k=1,\dots,K$, 
	\begin{equation*}
		p_k^{(N)} \to p_k \quad \text{in } L^{\infty}(\Omega), \quad N \to \infty,
	\end{equation*} 
	where $\{ p_k \}_{k=1}^K$ is a basis of $\mathbb{P}_d(\Omega)$ consisting of continuous orthogonal polynomials satisfying 
	\begin{equation*} 
		\int_{\Omega} p_k(\boldsymbol{x}) p_l(\boldsymbol{x}) \intd \boldsymbol{x} 
			= \delta_{kl}, \quad k,l=1,\dots,K.
	\end{equation*}
	Moreover, it holds that 
	\begin{equation*} 
		\lim_{N \to \infty} \int_{\Omega} p_k^{(N)}(\boldsymbol{x}) p_l^{(N)}(\boldsymbol{x}) \intd \boldsymbol{x} = \delta_{kl}, \quad k,l=1,\dots,K.
	\end{equation*}
\end{lemma}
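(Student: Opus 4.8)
The plan is to exploit that equidistribution turns the discrete inner product $[\cdot,\cdot]_{X_N}$ into a discrete approximation of the continuous $L^2$ inner product $\langle u,v\rangle = \int_\Omega u(\boldsymbol{x})v(\boldsymbol{x})\intd\boldsymbol{x}$, and that orthonormalization depends continuously on the underlying inner product. First I would fix a reference basis of $\mathbb{P}_d(\Omega)$---say the monomials $\{m_1,\dots,m_K\}$---and record that for every pair $i,j$ the product $m_im_j$ is again a polynomial, hence a bounded function that is continuous on the bounded domain $\Omega$. The defining property of an equidistributed sequence then gives $[m_i,m_j]_{X_N}\to\langle m_i,m_j\rangle$ as $N\to\infty$. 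Collecting these entries, the discrete Gram matrix $G^{(N)}$ converges entrywise---and therefore in any matrix norm---to the continuous Gram matrix $G$ whose $(i,j)$-entry is $\langle m_i,m_j\rangle$.

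Next I would argue that both the limit and the pre-limit Gram matrices are symmetric positive definite: $G$ because $\langle\cdot,\cdot\rangle$ is positive definite on $\mathbb{P}_d(\Omega)$, and $G^{(N)}$ for all $N$ beyond some threshold because $X_N$ becomes $\mathbb{P}_d(\Omega)$-unisolvent, as recalled below Theorem \ref{thm:main}. I would then realize the DOPs as the Cholesky orthonormalization of the monomial basis: writing $G^{(N)}=L^{(N)}(L^{(N)})^T$ and $G=LL^T$ for the (unique, positive-diagonal) Cholesky factors, the coefficient vectors of $p_k^{(N)}$ and $p_k$ with respect to $\{m_j\}$ are the columns of $(L^{(N)})^{-T}$ and $L^{-T}$, respectively. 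Using Cholesky instead of plain Gram--Schmidt automatically fixes the sign ambiguity that otherwise leaves DOPs unique only up to a sign.

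The crux is continuity of this orthonormalization map. Since Cholesky factorization is a continuous (indeed smooth) function of a positive definite matrix, and matrix inversion is continuous on the invertible matrices, $G^{(N)}\to G$ forces $(L^{(N)})^{-T}\to L^{-T}$. Hence the coefficients of $p_k^{(N)}$ converge to those of $p_k$ for every $k$, and the limits $p_k$ are orthonormal in $L^2(\Omega)$ by construction. Because $\mathbb{P}_d(\Omega)$ is finite-dimensional, all norms on it are equivalent, so coefficientwise convergence upgrades to $p_k^{(N)}\to p_k$ in $L^\infty(\Omega)$, which is the first assertion.

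Finally, the remaining claim follows from the first: since $\Omega$ is bounded, $L^\infty$-convergence implies $L^2$-convergence, and the $L^2$ inner product is jointly continuous, so $\int_\Omega p_k^{(N)}p_l^{(N)}\intd\boldsymbol{x}=\langle p_k^{(N)},p_l^{(N)}\rangle\to\langle p_k,p_l\rangle=\delta_{kl}$. The main obstacle I anticipate is the bookkeeping around the sign and uniqueness of the DOPs together with a clean proof that orthonormalization is continuous; routing everything through the Cholesky factorization, rather than through Gram--Schmidt with an ad hoc sign convention, is precisely what makes this step painless.
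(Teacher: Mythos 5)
Your proof is correct, but it takes a genuinely different route from the paper, which does not actually prove the lemma: the paper's ``proof'' is a one-line citation to Lemmas 11 and 12 of \cite{glaubitz2020stableCFs} (stated there for a general positive weight function $\omega$ and specialized here to $\omega \equiv 1$). Your argument, by contrast, is self-contained: equidistribution applied to the finitely many polynomial products $m_i m_j$ (bounded and continuous on $\Omega$) gives entrywise convergence of the Gram matrices $G^{(N)} \to G$; positive definiteness of $G^{(N)}$ for large $N$ follows from the $\mathbb{P}_d(\Omega)$-unisolvency recalled below Theorem \ref{thm:main}, and that of $G$ from $|\Omega| > 0$; continuity of the Cholesky factorization and of matrix inversion on the cone of symmetric positive definite matrices then transports this to the coefficient matrices, $(L^{(N)})^{-T} \to L^{-T}$; and norm equivalence on the finite-dimensional space $\mathbb{P}_d(\Omega)$ (where $\|\cdot\|_{L^\infty(\Omega)}$ is indeed a norm, again since $|\Omega|>0$) upgrades coefficientwise convergence to $L^\infty$-convergence, after which the final orthonormality claim follows from joint continuity of the $L^2$ inner product. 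A point worth stressing, which your Cholesky route handles and which the lemma statement glosses over: DOP bases are unique only up to orthogonal transformations (up to signs in the simplest case), so the convergence $p_k^{(N)} \to p_k$ is \emph{false} for a fully arbitrary choice of DOP basis (flip a sign for every other $N$); your construction pins the basis down as the unique one whose transition matrix from a fixed ordered basis is triangular with positive diagonal, which is consistent with the convention the paper implicitly uses in the proof of Theorem \ref{thm:main} (where $p_1^{(N)} \equiv |\Omega|^{-1/2}$). Incidentally, the lemma's third claim does hold for any DOP basis, since any admissible coefficient matrix has the form $(L^{(N)})^{-T} Q^{(N)}$ with $Q^{(N)}$ orthogonal and hence remains bounded as $N \to \infty$, but the first claim genuinely needs your normalization. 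In short: the citation buys the paper generality (weighted inner products, reusable machinery), while your argument buys a short, transparent, self-contained proof in which the normalization ambiguity is resolved explicitly rather than silently.
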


\begin{proof} 
	The assertion is a combination of Lemma 11 and 12 from \cite{glaubitz2020stableCFs}, where a general positive weight function $\omega$ was considered. 
	Here, we only consider the case $\omega \equiv 1$. 
\end{proof}

Essentially, Lemma \ref{lem:technical} states that if a sequence of discrete inner products converges to a continuous one, then also the corresponding DOPs---assuming that the ordering of the elements does not change---converges to a basis of continuous orthogonal polynomials. 
Furthermore, this convergence holds in a uniform sense. 
We are now able to provide a proof for Theorem \ref{thm:main}. 

\begin{proof}[Proof of Theorem \ref{thm:main}]
	Let $d \in \N$ and $m \in \{1,\dots,N\}$. 
	Under the assumptions of Theorem \ref{thm:main}, we have $I[\varphi_n] = I[\varphi_m]$ for all $n=1,\dots,N$. 
	Thus, Lemma \ref{lem:rep_cm}implies  
	\begin{equation*}
		I[c_m] 
			= I[\varphi_m] \left( 1 - \frac{|\Omega|}{N} \sum_{n=1}^N \sum_{k=1}^K p^{(N)}_k(\mathbf{x}_m) p^{(N)}_k(\mathbf{x}_{n}) \right)
			+ \frac{|\Omega|}{N} \sum_{k=1}^K p^{(N)}_k(\mathbf{x}_m) I[p_k]. 
	\end{equation*} 
	Let $\{ p_k^{(N)} \}_{k=1}^K$ be a basis of $\mathbb{P}_d(\Omega)$ consisting of DOPs. 
	That is, $[p_k^{(N)},p_l^{(N)}]_{X_N} = \delta_{kl}$. 
	In particular, $p_1^{(N)} \equiv |\Omega|^{-1/2}$. 
	With this in mind, it is easy to verify that 
	\begin{equation}\label{eq:omega_proof1} 
	\begin{aligned} 
		\frac{|\Omega|}{N} \sum_{n=1}^N \sum_{k=1}^K p^{(N)}_k(\mathbf{x}_m) p^{(N)}_k(\mathbf{x}_n) 
		= \sum_{k=1}^K p^{(N)}_k(\mathbf{x}_m) |\Omega|^{1/2} [p^{(N)}_k,p^{(N)}_1]_{X_N}
		= 1. 
	\end{aligned}
	\end{equation} 
	Thus, we have 
	\begin{equation*} 
		I[c_m] \geq 0 \iff 
			\sum_{k=1}^K p_k^{(N)}(\mathbf{x}_m) I[p_k^{(N)}] \geq 0.
	\end{equation*} 
	Finally, observe that 
	\begin{equation*}
		\sum_{k=1}^K p_k^{(N)}(\mathbf{x}_m) I[p_k^{(N)}] 
			= |\Omega|^{1/2} \sum_{k=1}^K p_k^{(N)}(\mathbf{x}_m) \int_{\Omega} p_k^{(N)}(\boldsymbol{x}) p_1^{(N)}(\boldsymbol{x}) \intd \boldsymbol{x}, 
	\end{equation*} 
	under the assumption that $\omega \equiv 1$. 
	Lemma \ref{lem:technical} therefore implies 
	\begin{equation}\label{eq:omega_proof2} 
		\lim_{N \to \infty} \sum_{k=1}^K p_k^{(N)}(\mathbf{x}_m) I[p_k^{(N)}] = 1, 
	\end{equation}
	which completes the proof. 
\end{proof} 

\begin{remark}[On the assumption that $\omega \equiv 1$]\label{rem:omega}
	The assumption that $\omega \equiv 1$ in Theorem \ref{thm:main} is necessary for \eqref{eq:omega_proof1} and \eqref{eq:omega_proof2} to both hold true. 
	On the one hand, \eqref{eq:omega_proof1} is ensured by the the DOPs being orthogonal w.\,r.\,t.\ the discrete inner product \eqref{eq:discrete_scp}. 
	This discrete inner product can be considered as an approximation to the continuous inner product ${\scp{u}{v} = \int_{\Omega} u(\boldsymbol{x}) v(\boldsymbol{x}) \intd \boldsymbol{x}}$. 
	This also results in Lemma\ref{lem:technical}. 
	On the other hand, in general, \eqref{eq:omega_proof2} only holds if the DOPs converge to a basis of polynomials that is orthogonal w.\,r.\,t.\ the weighted continuous inner product ${\scp{u}{v}_{\omega} = \int_{\Omega} u(\boldsymbol{x}) v(\boldsymbol{x}) \omega(\boldsymbol{x}) \intd \boldsymbol{x}}$. 
	Hence, for \eqref{eq:omega_proof1} and \eqref{eq:omega_proof2} to both hold true at the same time, we have to assume that $\omega \equiv 1$. 
	In this case, the two continuous inner products are the same.
\end{remark}

\section{Provable stable least squares RBF-QFs} 
\label{sub:LS} 

Theorem \ref{thm:main} shows that compactly supported RBFs (e.\,g.\ Wendland's kernels) can lead to stable interpolatory QFs if the shape parameter is so that none of the shifted kernels have a region of overlap. 
In our numerical tests, we observed this condition not just to be sufficient but also often being ``close to" necessary. 
We often found the RBF-QF even to have negative weights when the support regions only slightly overlapped. 
At the same time, it is known that scaling Wendland's kernels so that the support decreases with the number of data points results in the interpolation error to decrease only slowly or even to stagnate \cite{fasshauer2007meshfree}. 

To provide a more practical procedure for ensuring stability of RBF-QFs, we now demonstrate how a least-squares approach \cite{huybrechs2009stable,glaubitz2020stable,glaubitz2020stableQFs,glaubitz2020stableCFs} can be used to construct stable RBF-QFs by allowing the number of data points used for numerical integration to be larger than the number of centers that are used to generate the RBF approximation space. 
The subsequent least-squares approach is \emph{not} limited to compactly supported kernels and can be used to construct stable QFs that are exact for fairly general RBF approximation spaces. 
The only restrictions are that the RBF approximation space consists of continuous and bounded functions and contains constants. 
Further, the number of data points used by quadrature has to be sufficiently larger than the dimension of the RBF approximation space. 
Although the least-squares approach has recently been extended to general multi-dimensional function spaces that include constants in \cite{glaubitz2020constructing}, the implications for RBF-QF have not yet been explored. 
To this end, we consider a given center point set $Y_M = \{\mathbf{y}_m\}_{m=1}^M$, generating the $M$-dimensional RBF space $\mathcal{S}_{M,d}$, and a larger data point set $X_N = \{\mathbf{x}_n\}_{n=1}^N$ with $N > M$. 
Then, any QF $C_N[f] = \sum_{n=1}^N w_n f(x_n)$ that is exact for all $f \in \mathcal{S}_{M,d}$ has to satisfy 
\begin{equation}\label{eq:LS_weights2} 
	\underbrace{
	\begin{bmatrix} 
		b_1(\mathbf{x}_1) & \dots & b_1(\mathbf{x}_N) \\ 
		\vdots & & \vdots \\ 
		b_M(\mathbf{x}_1) & \dots & b_M(\mathbf{x}_N) 
	\end{bmatrix} 
	}_{= B}
	\underbrace{
	\begin{bmatrix} w_1 \\ \vdots \\ w_N \end{bmatrix} 
	}_{= \mathbf{w}}
	= 
	\underbrace{
	\begin{bmatrix} I[b_1] \\ \vdots \\ I[b_M] \end{bmatrix}
	}_{= \mathbf{m}}, 
\end{equation}
where $\{b_m\}_{m=1}^M$ is a basis of $\mathcal{S}_{M,d}$. 
The matrix $B$ in \eqref{eq:LS_weights2} depends on $X_N$ and $Y_M$ (as well as on the kernel $\varphi$ and the polynomial degree $d$), which we denote by $B = B(X_N,Y_M)$. 
Assume that the data point set $X_N$ is $\mathcal{S}_{M,d}$-unisolvent, i.\,e., 
\begin{equation*}
	f(\mathbf{x}_n) = 0, \ \forall \mathbf{x}_n \in X_N \implies f \equiv 0 
\end{equation*} 
holds for all $f \in \mathcal{S}_{M,d}$.\footnote{ 
$X_N$ is $\mathcal{S}_{M,d}$-unisolvent, for instance, when the kernel $\varphi$ is conditionally positive definite of order $d$ and $X_N$ is $\mathbb{P}_{d}(\Omega)$-unisolvent, which is a common assumption to ensure uniqueness of RBF interpolants.
}
Then \eqref{eq:LS_weights2} has infinitely many solutions, which form a $(N-M)$-dimensional affine linear subspace $W$. 
Every $\mathbf{w} \in W$ yields a QF that is exact for all functions from the RBF space $\mathcal{S}_{M,d}$. 
We want to find a positive solution $\mathbf{w} \in W$ so that the corresponding QF with weights $\mathbf{w}$ is stable (see Definition \ref{def:stability}). 
To this end, we use the following result from \cite{glaubitz2020constructing}. 
\begin{lemma}[Corollary 3.6 in \cite{glaubitz2020constructing}]\label{lem:LS}
	Let $\Omega \subset \R^D$, $\omega: \Omega \to \R_0^+$ be a Riemann integrable weight function that is positive almost everywhere, and let $\mathcal{F} = {\rm span}\{ \, b_m \mid m=1,\dots,M \, \}$ be a finite-dimensional linear space of continuous and bounded functions that contains constants. 
	Further, let $(\mathbf{x}_n)_{n \in \N}$ be an equidistributed sequence in $\Omega$ with $\omega(\mathbf{x}_n) > 0$ for all $n \in \N$ and denote the affine linear subspace of solutions of \eqref{eq:LS_weights2} by $W_{\mathcal{F}}$. 
	Then there exists an $N_0 \in \N$ such that for all $N \geq N_0$ and discrete weights 
	\begin{equation*}
		r_{n,N} = |\Omega| \omega(\mathbf{x}_n)/N, \quad n=1,\dots,N,  
	\end{equation*}
	the corresponding least-squares QF 
	\begin{equation*} 
		C_N^{\rm LS}[f] = \sum_{n=1}^N w_n^{\rm LS} f(\mathbf{x}_n) 
		\quad \text{with} \quad 
		\mathbf{w}^{\rm LS} = \argmin_{\mathbf{w} \in W_{\mathcal{F}}} \| R^{-1/2} \mathbf{w} \|_2,
	\end{equation*} 
	where $R^{-1/2} = \diag( 1/\sqrt{r_1}, \dots, 1/\sqrt{r_N} )$, is positive and exact for all $f \in \mathcal{F}$. 
\end{lemma}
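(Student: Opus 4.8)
The exactness claim is immediate: by construction every $\mathbf{w} \in W_{\mathcal{F}}$ solves the moment system \eqref{eq:LS_weights2}, so the associated QF integrates each basis function $b_m$ --- and hence every $f \in \mathcal{F}$ --- exactly. The substance of the lemma is therefore the positivity of the particular minimum-norm solution $\mathbf{w}^{\mathrm{LS}}$ for all sufficiently large $N$. My plan is to first write $\mathbf{w}^{\mathrm{LS}}$ in closed form and then to show that, after dividing out the (positive) discrete weights $r_{n,N}$, the remaining factor is the evaluation at $\mathbf{x}_n$ of a function $g_N \in \mathcal{F}$ that converges uniformly to the constant $1$.

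First I would solve the equality-constrained minimization $\min\{ \norm{R^{-1/2}\mathbf{w}}_2 : B\mathbf{w} = \mathbf{m} \}$ via Lagrange multipliers. The stationarity condition gives $\mathbf{w} = R B^T \boldsymbol{\lambda}$, and substituting into the constraint yields $\boldsymbol{\lambda} = (B R B^T)^{-1}\mathbf{m}$, so that
\begin{equation*}
  \mathbf{w}^{\mathrm{LS}} = R B^T (B R B^T)^{-1} \mathbf{m},
  \qquad
  w_n^{\mathrm{LS}} = r_{n,N} \sum_{k=1}^M b_k(\mathbf{x}_n) \lambda_k.
\end{equation*}
Writing $g_N := \sum_{k=1}^M \lambda_k b_k \in \mathcal{F}$, this reads $w_n^{\mathrm{LS}} = r_{n,N}\, g_N(\mathbf{x}_n)$. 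Since $r_{n,N} = |\Omega|\omega(\mathbf{x}_n)/N > 0$ by hypothesis, positivity of $\mathbf{w}^{\mathrm{LS}}$ reduces to showing $g_N(\mathbf{x}_n) > 0$ for every $n$.

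The key is to analyze $G_N := B R B^T$, whose entries are
\begin{equation*}
  (G_N)_{kl} = \frac{|\Omega|}{N} \sum_{n=1}^N \omega(\mathbf{x}_n) b_k(\mathbf{x}_n) b_l(\mathbf{x}_n).
\end{equation*}
Because $(\mathbf{x}_n)_{n\in\N}$ is equidistributed and $\omega\, b_k\, b_l$ is bounded and continuous almost everywhere, each entry converges as $N \to \infty$ to $(G)_{kl} = \scp{b_k}{b_l}_{\omega}$, the Gram matrix of $\{b_k\}$ with respect to the weighted inner product. This limit $G$ is symmetric positive definite, since the $b_k$ are linearly independent and $\omega > 0$ almost everywhere; by continuity of the determinant, $G_N$ is invertible for all large $N$ (so that $W_{\mathcal{F}}$ is nonempty and the minimization is well-posed) and $G_N^{-1}\to G^{-1}$. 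Consequently the coefficient vector $\boldsymbol{\lambda} = \boldsymbol{\lambda}(N) = G_N^{-1}\mathbf{m}$ converges to $\mathbf{c} := G^{-1}\mathbf{m}$, where $\mathbf{m}$ (the vector of continuous moments $I[b_k]$) does not depend on $N$.

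It remains to identify the limit function $g := \sum_k c_k b_k$. The relation $G\mathbf{c}=\mathbf{m}$ says exactly $\scp{b_k}{g}_{\omega} = I[b_k] = \scp{b_k}{1}_{\omega}$ for every $k$, i.e. $g - 1 \perp \mathcal{F}$ in the weighted inner product. Since $\mathcal{F}$ contains constants, $g - 1 \in \mathcal{F}$, and an element of $\mathcal{F}$ orthogonal to all of $\mathcal{F}$ must vanish; hence $g \equiv 1$. Finally, $\norm{g_N - g}_{L^\infty(\Omega)} \leq \sum_k |\lambda_k(N) - c_k|\, \norm{b_k}_{L^\infty(\Omega)} \to 0$, so $g_N \to 1$ uniformly. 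Choosing $N_0$ so that $\norm{g_N - 1}_{L^\infty(\Omega)} < 1$ for $N \geq N_0$ forces $g_N > 0$ on all of $\Omega$, whence $w_n^{\mathrm{LS}} = r_{n,N}\, g_N(\mathbf{x}_n) > 0$. The main obstacle --- and the one place where the structural hypothesis $1 \in \mathcal{F}$ is indispensable --- is the clean identification $g \equiv 1$, together with ensuring that a single threshold $N_0$ works simultaneously for all $n$; the latter is exactly what the uniform (rather than merely pointwise) convergence $g_N \to 1$ delivers.
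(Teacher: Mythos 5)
Your proof is correct. The paper itself offers no proof of this lemma---it is imported as Corollary 3.6 of \cite{glaubitz2020constructing}---but your argument follows essentially the same route as that reference: the closed form $\mathbf{w}^{\rm LS} = RB^T(BRB^T)^{-1}\mathbf{m}$, so that $w_n^{\rm LS} = r_{n,N}\,g_N(\mathbf{x}_n)$ with $g_N \in \mathcal{F}$; convergence of the discrete Gram matrix $BRB^T$ to the weighted continuous Gram matrix via equidistribution (which is exactly where Riemann integrability of $\omega$ is needed, since the equidistribution property applies to bounded, a.e.\ continuous integrands); identification of the limit function as the constant $1$ using $1 \in \mathcal{F}$ and positive definiteness of $\scp{\cdot}{\cdot}_{\omega}$ on $\mathcal{F}$; and uniform convergence $g_N \to 1$ to obtain a single threshold $N_0$ valid for all nodes.
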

If we apply Lemma \ref{lem:LS} to the RBF function space $\mathcal{S}_{M,d}$, we get Corollary \ref{cor:LSRBFQF}. 
\begin{corollary}\label{cor:LSRBFQF}
	Let $\Omega \subset \R^D$ be compact and let $\omega: \Omega \to \R_0^+$ be a Riemann integrable weight function that is positive almost everywhere. 
	Further, let $d\geq0$ be an integer, let $\varphi:\R^+_0: \to \R$ be a continuous and conditionally positive kernel of order $d$, and let $\{ \mathbf{y}_m \}_{m=1}^M$ be a given set of centers. 
	If $(\mathbf{x}_n)_{n \in \N}$ is an equidistributed sequence in $\Omega$ with $\omega(\mathbf{x}_n) > 0$ for all $n \in \N$, then there exists an $N_0 \in \N$ such that for all $N \geq N_0$ and discrete weights 
	\begin{equation*}
		r_{n,N} = |\Omega| \omega(\mathbf{x}_n)/N, \quad n=1,\dots,N,  
	\end{equation*}
	the corresponding least-squares RBF-QF
	\begin{equation}\label{eq:weighted_LS_sol} 
		C_N^{\rm LS}[f] = \sum_{n=1}^N w_n^{\rm LS} f(\mathbf{x}_n) 
		\quad \text{with} \quad 
		\mathbf{w}^{\rm LS} = \argmin_{\mathbf{w} \in W} \| R^{-1/2} \mathbf{w} \|_2,
	\end{equation} 
	where $R^{-1/2} = \diag( 1/\sqrt{r_1}, \dots, 1/\sqrt{r_N} )$, is positive and exact for all $f \in \mathcal{S}_{M,d}$.
\end{corollary}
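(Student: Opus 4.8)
The plan is to obtain Corollary \ref{cor:LSRBFQF} as a direct specialization of Lemma \ref{lem:LS}, taking the abstract space $\mathcal{F}$ to be the RBF approximation space $\mathcal{S}_{M,d}$ generated by the centers $\{\mathbf{y}_m\}_{m=1}^M$. Since Lemma \ref{lem:LS} already supplies the existence of $N_0$ together with positivity and exactness of the weighted minimum-norm least-squares QF for \emph{any} admissible $\mathcal{F}$, the whole task reduces to checking that $\mathcal{S}_{M,d}$ satisfies the structural hypotheses of the lemma: it is a finite-dimensional linear space of continuous and bounded functions that contains the constants.

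First I would record that $\mathcal{S}_{M,d}$ is finite-dimensional, being the span of the $M$ shifted kernels $\boldsymbol{x} \mapsto \varphi(\varepsilon_m \|\boldsymbol{x} - \mathbf{y}_m\|_2)$ together with a basis of $\mathbb{P}_d(\Omega)$; this is the $M$-dimensional analogue of the space $\mathcal{S}_{N,d}$ introduced in \S\ref{sub:prelim_RBFs}. Continuity then follows at once, since each shifted kernel is continuous (as $\varphi$ is continuous by assumption and the Euclidean norm is continuous) and polynomials are continuous, so every finite linear combination---hence every element of $\mathcal{S}_{M,d}$---is continuous on $\Omega$.

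For boundedness I would invoke the compactness of $\Omega$: a continuous function on a compact set is bounded, so each generator of $\mathcal{S}_{M,d}$ is bounded on $\Omega$, and therefore so is every element of its finite-dimensional span. The containment of constants is guaranteed by the assumption $d \geq 0$, which yields $\mathbb{P}_0(\Omega) \subseteq \mathbb{P}_d(\Omega) \subseteq \mathcal{S}_{M,d}$. With these properties in hand, Lemma \ref{lem:LS} applies verbatim with $\mathcal{F} = \mathcal{S}_{M,d}$ and $W_{\mathcal{F}} = W$, delivering the claimed $N_0$ and the positivity and exactness of $C_N^{\rm LS}$ for all $f \in \mathcal{S}_{M,d}$.

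Because the argument is essentially bookkeeping, there is no serious obstacle; the only point deserving a moment's care is boundedness, where the compactness of $\Omega$ is doing genuine work---on an unbounded domain a continuous kernel need not stay bounded, which is exactly why the corollary hypothesizes $\Omega$ compact even though Lemma \ref{lem:LS} is phrased more abstractly. I would also remark in passing that the conditional positive definiteness of $\varphi$ of order $d$, combined with the $\mathbb{P}_d(\Omega)$-unisolvency that equidistribution forces for all sufficiently large $N$, is what ensures $X_N$ is $\mathcal{S}_{M,d}$-unisolvent, so that the solution set $W$ of \eqref{eq:LS_weights2} is indeed the nonempty affine subspace over which the minimization in \eqref{eq:weighted_LS_sol} is carried out.
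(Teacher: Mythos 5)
Your proposal is correct and follows essentially the same route as the paper's own proof: both specialize Lemma \ref{lem:LS} to $\mathcal{F} = \mathcal{S}_{M,d}$ after verifying the same four structural hypotheses (finite dimension, continuity from the continuity of $\varphi$ and of polynomials, boundedness from compactness of $\Omega$, and containment of constants from $d \geq 0$). Your closing remark on $\mathcal{S}_{M,d}$-unisolvency of $X_N$ is a helpful addition the paper only records in a footnote near \eqref{eq:LS_weights2}, but it does not change the argument.
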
 
\begin{proof}
	We first note that the RBF function space $\mathcal{S}_{M,d}$, which we defined in \S \ref{sec:prelim}, is $M$-dimensional with $M < \infty$, i.\,e., finite-dimensional. 
	Because the kernel $\varphi$ and all polynomials up to degree $d$ are continuous, all functions from $\mathcal{S}_{M,d}$ are continuous. 
	Further, since $\Omega \subset \R^D$ is compact and all functions from $\mathcal{S}_{M,d}$ are continuous, they are also bounded. 
	Finally, $d \geq 0$ implies that $\mathcal{S}_{M,d}$ contains constants. 
	Corollary \ref{cor:LSRBFQF} now follows from Lemma \ref{lem:LS} with $\mathcal{F} = \mathcal{S}_{M,d}$. 
\end{proof}

The weighted least-squares solution \eqref{eq:weighted_LS_sol} has the advantage of being easy and efficient to compute using standard tools from linear algebra.  
The above discussion motivates us to formulate the following procedure to construct stable least-squares RBF-QFs (LSRBF-QFs). 

\begin{algorithm}[h]
\caption{Constructing stable LSRBF-QFs}\label{algo:LSRBF} 
\begin{algorithmic}[1]
    \STATE{\textbf{Input:} Center points $\{\mathbf{y}_m\}_{m=1}^M$, kernel $\varphi$, polynomial degree $d \geq 0$, weight function $\omega$, and equidistributed data points $(\mathbf{x}_n)_{n \in \N}$} 
    \STATE{\textbf{Output:} An integer $N \geq M$ and a stable LSRBF-QF with points $\{\mathbf{x}_n\}_{n=1}^N$ and weights $\mathbf{w}^{\rm LS} \in \R^N$} 
    \STATE{Set $\mathbf{w}^{\rm LS}$ equal to the weights of the interpolatory RBF-QF given by \eqref{eq:LS_weights}}
    \REPEAT
    		\STATE{Increase the number of data points by one: $N = N+1$} 
    		\STATE{Set $X_N = \{\mathbf{x}_n\}_{n=1}^N$} 
		\STATE{Compute the matrix $B = B(X_N,Y_M)$ as in \eqref{eq:LS_weights2}}
		\STATE{Compute the weighted least-squares solution $\mathbf{w}^{\rm LS}$ as in \eqref{eq:weighted_LS_sol}}
		\STATE{Determine the smallest weight: $w_{\rm min} = \min( \mathbf{w}^{\rm LS} )$}
    \UNTIL{$\mathbf{w}^{\rm LS} \geq 0$}
\end{algorithmic}
\end{algorithm} 

Algorithm \ref{algo:LSRBF} assumes that $X_M$ is $\mathcal{S}_{M,d}$-unisolvent, since the interpolatory RBF-QF given by \eqref{eq:LS_weights} would not be defined otherwise. 
The possible advantage of stable LSRBF-QFs compared to (potentially unstable) interpolatory RBF-QFs is demonstrated in \S\ref{sub:num_LSRBF}. 
Finally, we point out the potential application of stable LSRBF-QFs to the construction of stable RBF methods for time-dependent hyperbolic partial differential equations \cite{tominec2021stability,glaubitz2022energy}. 
A crucial part of these methods is replacing exact integrals involving the approximate solution---in this case, an (local) RBF function--- with a quadrature that should be as accurate as possible for functions from the approximation space. 
\section{Polynomial terms do not influence asymptotic stability}
\label{sec:connection} 

Recall that Theorem \ref{thm:main} in \S\ref{sec:compact} holds regardless of the degree $d$ of the polynomial term included in the RBF interpolant. 
Indeed, one might generally ask, ``how are polynomial terms influencing stability of the RBF-QF?".
In what follows, we address this question by showing that---under certain assumptions that are to be specified yet---at least asymptotic stability of RBF-QFs is independent of polynomial terms. 

Recently, the following explicit formula for the cardinal functions was derived in \cite{bayona2019insight,bayona2019comparison}. 
Let us denote ${\mathbf{c}(\boldsymbol{x}) = [c_1(\boldsymbol{x}),\dots,c_N(\boldsymbol{x})]^T}$, where $c_1,\dots,c_N$ are the cardinal functions spanning $\mathcal{S}_{N,d}$; see \eqref{eq:cardinal} and \eqref{eq:cond_cardinal}. 
Provided that $\Phi$ and $P$ in \eqref{eq:Phi_P} have full rank\footnote{
$P$ having full rank means that $P$ has full column rank, i.\,e., the columns of $P$ are linearly independent. 
This is equivalent to the set of data points being $\mathbb{P}_d(\Omega)$-unisolvent. 
}, 
\begin{equation}\label{eq:formula_Bayona}
	\mathbf{c}(\boldsymbol{x}) 
		= \hat{\mathbf{c}}(\boldsymbol{x}) 
		- B \boldsymbol{\tau}(\boldsymbol{x})
\end{equation} 
holds. 
Here, $\hat{\mathbf{c}}(\boldsymbol{x}) = [\hat{c}_1(\boldsymbol{x}),\dots,\hat{c}_N(\boldsymbol{x})]^T$ are the cardinal functions corresponding to the pure RBF interpolation without polynomials. 
That is, they span $\mathcal{S}_{N,-1}$. 
At the same time, $B$ and $\boldsymbol{\tau}$ are defined as 
\begin{equation*} 
	B := \Phi^{-1} P \left( P^T \Phi^{-1} P \right)^{-1}, 
	\quad 
	\boldsymbol{\tau}(\boldsymbol{x}) := P^T \hat{\mathbf{c}}(\boldsymbol{x}) - \mathbf{p}(\boldsymbol{x})
\end{equation*} 
with ${\mathbf{p}(\boldsymbol{x}) = [p_1(\boldsymbol{x}),\dots,p_K(\boldsymbol{x})]^T}$. 
Note that $\boldsymbol{\tau}$ can be interpreted as a residual measuring how well pure RBFs can approximate polynomials up to degree $d$. 
Recalling \eqref{eq:RBF-CRs}, we see that \eqref{eq:formula_Bayona} implies 
\begin{equation}\label{eq:relation_weights}
	\mathbf{w} 
		= \hat{\mathbf{w}} - B I[\boldsymbol{\tau}],
\end{equation} 
where $\mathbf{w}$ is the vector of quadrature weights of the RBF-QF with polynomials ($d \geq 0$). 
At the same time, $\hat{\mathbf{w}}$ is the vector of weights corresponding to the pure RBF-QF without polynomial augmentation ($d=-1$). 
Moreover, $I[\boldsymbol{\tau}]$ denotes the componentwise application of the integral operator $I$.
It was numerically demonstrated in \cite{bayona2019insight} that for fixed $d \in \N$ one has 
\begin{equation}\label{eq:observation_Bayona}
	\max_{\boldsymbol{x} \in \Omega} \| B \boldsymbol{\tau}(\boldsymbol{x}) \|_{\ell^\infty} \to 0 
	\quad \text{as} \quad N \to \infty
\end{equation}
if PHS are used. 
Note that, for fixed $\boldsymbol{x} \in \Omega$, $B \boldsymbol{\tau}(\boldsymbol{x})$ is an $N$-dimensional vector and $\| B \boldsymbol{\tau}(\boldsymbol{x}) \|_{\ell^\infty}$ denotes its $\ell^{\infty}$-norm. 
That is, the maximum absolute value of the $N$ components. 
It should be pointed out that \eqref{eq:observation_Bayona} was numerically demonstrated only for PHS in \cite{bayona2019insight}. 
However, the relations \eqref{eq:formula_Bayona} and \eqref{eq:relation_weights} hold for general RBFs as well as varying shape parameters, assuming that $\Phi$ and $P$ have full rank. 
Please see \cite[Section 4]{bayona2019insight} for more details. 
We also remark that \eqref{eq:observation_Bayona} implies the weaker statement 
\begin{equation}\label{eq:cond}
	\| B \boldsymbol{\tau}(\cdot) \|_{\ell^1} \to 0 
	\ \ \text{in } L^1(\Omega) \quad \text{as} \quad N \to \infty.
\end{equation} 
Here, $B \boldsymbol{\tau}(\cdot)$ denotes a vector-valued function, $B \boldsymbol{\tau}: \Omega \to \R^N$. 
That is, for a fixed argument $\boldsymbol{x} \in \Omega$, $B \boldsymbol{\tau}(\boldsymbol{x})$ is an $N$-dimensional vector in $\R^N$ and $\| B \boldsymbol{\tau}(\boldsymbol{x}) \|_{\ell^1}$ denotes the usual $\ell^1$-norm of this vector.  
Thus, \eqref{eq:cond} means that the integral of the $\ell^1$-norm of the vector-valued function $B \boldsymbol{\tau}(\cdot)$ converges to zero as $N \to \infty$.
The above condition is not just weaker than \eqref{eq:observation_Bayona} (see Remark \ref{rem:assumption2}), but also more convenient to investigate stability of QFs. 
Indeed, we have the following results.

\begin{theorem}\label{thm:connection} 
	Let $\omega \in L^{\infty}(\Omega)$.
	Assume $\Phi$ and $P$ in \eqref{eq:Phi_P} have full rank, and assume \eqref{eq:cond} holds. 
	Then the two following statements are equivalent: 
	\begin{enumerate} 
		\item[(a)] 
		$\| \hat{\mathbf{w}} \|_{\ell^1} \to \|I\|_{\infty}$ for $N \to \infty$ 
		
		\item[(b)]
		$\| \mathbf{w} \|_{\ell^1} \to \|I\|_{\infty}$ for $N \to \infty$
	\end{enumerate} 
	That is, either both the pure and polynomial augmented RBF-QF are asymptotically stable or none is. 
\end{theorem}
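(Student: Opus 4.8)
The plan is to show that the two weight vectors $\mathbf{w}$ and $\hat{\mathbf{w}}$ become asymptotically indistinguishable in the $\ell^1$-norm, so that their $\ell^1$-norms necessarily share the same limit. The natural starting point is the exact identity \eqref{eq:relation_weights}, $\mathbf{w} = \hat{\mathbf{w}} - B\, I[\boldsymbol{\tau}]$, which reduces the entire problem to controlling the size of the correction term $B\, I[\boldsymbol{\tau}]$. By the reverse triangle inequality,
\begin{equation*}
	\left| \, \| \mathbf{w} \|_{\ell^1} - \| \hat{\mathbf{w}} \|_{\ell^1} \, \right|
		\leq \| \mathbf{w} - \hat{\mathbf{w}} \|_{\ell^1}
		= \| B\, I[\boldsymbol{\tau}] \|_{\ell^1},
\end{equation*}
so it suffices to prove that $\| B\, I[\boldsymbol{\tau}] \|_{\ell^1} \to 0$ as $N \to \infty$. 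Once this is established, the equivalence of (a) and (b) is immediate: both quantities differ by a vanishing amount, hence one converges to $\|I\|_{\infty}$ if and only if the other does.

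The second step is to link this correction term to the hypothesis \eqref{eq:cond}. The key observation is that $B$ is a constant matrix, independent of $\boldsymbol{x}$, so the integral operator commutes with left multiplication by $B$, giving $B\, I[\boldsymbol{\tau}] = I[B\boldsymbol{\tau}]$, where $I$ is applied componentwise to the vector-valued function $B\boldsymbol{\tau}(\cdot)$. Writing out the $\ell^1$-norm, pulling the absolute value inside each integral, and interchanging the finite sum with the integral yields
\begin{equation*}
	\| I[B\boldsymbol{\tau}] \|_{\ell^1}
		= \sum_{n=1}^N \left| \int_\Omega (B\boldsymbol{\tau})_n(\boldsymbol{x}) \, \omega(\boldsymbol{x}) \intd \boldsymbol{x} \right|
		\leq \int_\Omega \| B\boldsymbol{\tau}(\boldsymbol{x}) \|_{\ell^1} \, |\omega(\boldsymbol{x})| \intd \boldsymbol{x},
\end{equation*}
where the interchange is unproblematic because the sum is finite and $\sum_{n=1}^N |(B\boldsymbol{\tau})_n(\boldsymbol{x})| = \| B\boldsymbol{\tau}(\boldsymbol{x}) \|_{\ell^1}$.

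The final step uses $\omega \in L^{\infty}(\Omega)$ to bound $|\omega(\boldsymbol{x})| \leq \| \omega \|_{L^\infty}$ almost everywhere, which pulls the weight out of the integral and leaves precisely the $L^1$-quantity appearing in \eqref{eq:cond}:
\begin{equation*}
	\| I[B\boldsymbol{\tau}] \|_{\ell^1}
		\leq \| \omega \|_{L^\infty} \int_\Omega \| B\boldsymbol{\tau}(\boldsymbol{x}) \|_{\ell^1} \intd \boldsymbol{x}
		\longrightarrow 0
		\quad \text{as } N \to \infty.
\end{equation*}
This closes the argument. The whole proof is essentially a reverse-triangle-inequality estimate combined with the commutation of the constant matrix $B$ with the integral operator and a single application of the bound $|\omega| \leq \| \omega \|_{L^\infty}$ (i.e.\ an $L^\infty$--$L^1$ H\"older step). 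I do not anticipate a serious obstacle; the only points requiring care are verifying that the scalar function $\boldsymbol{x} \mapsto \| B\boldsymbol{\tau}(\boldsymbol{x}) \|_{\ell^1}$ produced by the bound is exactly the object whose $L^1(\Omega)$-convergence is assumed in \eqref{eq:cond}, and noting that the boundedness of $\omega$ is genuinely what decouples the weight function from the vanishing $L^1$-norm. The conceptual content is simply that the polynomial correction to the weights is governed by the residual $\boldsymbol{\tau}$ measuring how poorly pure RBFs reproduce polynomials, and that \eqref{eq:cond} quantifies the asymptotic disappearance of exactly this obstruction.
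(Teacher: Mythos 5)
Your proof is correct and follows essentially the same route as the paper's: the identity $\mathbf{w} = \hat{\mathbf{w}} - B\,I[\boldsymbol{\tau}]$ from \eqref{eq:relation_weights}, commuting the constant matrix $B$ with the integral operator, the componentwise bound $\| I[B\boldsymbol{\tau}] \|_{\ell^1} \leq I\left[ \| B\boldsymbol{\tau} \|_{\ell^1} \right]$, and the $\| \omega \|_{L^{\infty}(\Omega)}$ bound to invoke \eqref{eq:cond}. Your reverse triangle inequality is just a compact packaging of the two one-sided triangle inequalities \eqref{eq:connection_proof} that the paper writes out separately, so there is no substantive difference.
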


A short discussion on the term ``asymptotically stable" is subsequently provided in Remark \ref{rem:asymptotic_stable}. 

\begin{proof} 
	Assume $\Phi$ and $P$ in \eqref{eq:Phi_P} have full rank, and assume \eqref{eq:cond} holds. 
	Then \eqref{eq:relation_weights} follows and therefore 
	\begin{equation}\label{eq:connection_proof} 
	\begin{aligned}
		\| \mathbf{w} \|_{\ell^1}
			& \leq \| \hat{\mathbf{w}} \|_{\ell^1} + \| BI[\boldsymbol{\tau}] \|_{\ell^1}, \\ 
		\| \hat{\mathbf{w}} \|_{\ell^1}
			& \leq \| \mathbf{w} \|_{\ell^1} + \| BI[\boldsymbol{\tau}] \|_{\ell^1}.
	\end{aligned}
	\end{equation} 
	Next, note that $BI[\boldsymbol{\tau}] = I[ B \boldsymbol{\tau}]$, and thus 
	\begin{equation*} 
	\begin{aligned} 
		\| BI[\boldsymbol{\tau}] \|_{\ell^1} 
			= \sum_{n=1}^N \left| I[ (B \boldsymbol{\tau})_n ] \right|  
			\leq I \left[ \sum_{n=1}^N | (B \boldsymbol{\tau})_n | \right] 
			= I \left[ \| B \boldsymbol{\tau} \|_{\ell^1} \right]. 
	\end{aligned}
	\end{equation*} 
	Since $\omega \in L^{\infty}(\Omega)$, it follows that 
	\begin{equation*} 
		\| BI[\boldsymbol{\tau}] \|_{\ell^1} 
			\leq \| \omega \|_{L^{\infty}(\Omega)} \int_{\Omega} \| B \boldsymbol{\tau}(\boldsymbol{x}) \|_{\ell^1} \intd \boldsymbol{x}.
	\end{equation*} 
	Hence, by assuming that \eqref{eq:cond} holds, we get $\| BI[\boldsymbol{\tau}] \|_{\ell^1} \to 0$ for fixed $d \in \N$ and $N \to \infty$. 
	Finally, substituting this into \eqref{eq:connection_proof} yields the assertion. 
\end{proof}

Theorem \ref{thm:connection} states that--under the listed assumptions---it is sufficient to consider asymptotic stability of the pure RBF-QF. 
Once asymptotic (in)stability is established for the pure RBF-QF, by Theorem \ref{thm:connection}, it also carries over to all corresponding augmented RBF-QFs. 
Interestingly, this follows our findings for compactly supported RBFs reported in Theorem \ref{thm:main}. 
There, conditional stability was ensured independently of the degree of the augmented polynomials.

\begin{remark}[Asymptotic stability]\label{rem:asymptotic_stable}
	We call a sequence of QFs with weights $\mathbf{w}_N \in \R^N$ for $N \in \N$ asymptotically stable if $\| \mathbf{w}_N \|_{\ell^1} \to \| I \|_{\infty}$ for $N \to \infty$. 
	Recall that $\| \mathbf{w}_N \|_{\ell^1} = \|C_N\|_{\infty}$ if the weights $\mathbf{w}_N$ correspond to the $N$-point QF $C_N$. 
	It is easy to note that this is a weaker property than every single QF being stable, i.\,e., $\| \mathbf{w}_N \|_{\ell^1} = \| I \|_{\infty}$ for all $N \in \N$. 
	That said, consulting \eqref{eq:L-inequality}, asymptotic stability is sufficient for the QF to converge for all functions that can be approximated arbitrarily accurate by RBFs w.\,r.\,t.\ the $L^{\infty}(\Omega)$-norm. 
	Of course, the propagation of input errors might be suboptimal for every single QF. 
\end{remark}

Theorem \ref{thm:connection} makes two assumptions. 
(1) $\Phi$ and $P$ are full rank matrices; and 
(2) the condition \eqref{eq:observation_Bayona} holds. 
In the two following remarks, we comment on these assumptions. 

\begin{remark}[On the first assumption of Theorem \ref{thm:connection}]\label{rem:assumption1}
	Although requiring $A$ and $P$ to have full rank might seem restrictive, there are often even more restrictive constraints in practical problems. 
	For instance, when solving partial differential equations, the data points are usually required to be smoothly scattered so that the distance between data points is kept roughly constant. 
	It seems unlikely to find $A$ and $P$ to be singular for such data points. 
	See \cite{bayona2019insight} for more details. 
\end{remark}

\begin{remark}[On the second assumption of Theorem \ref{thm:connection}]\label{rem:assumption2} 
	The second assumption for Theorem \ref{thm:connection} to hold is that \eqref{eq:cond} is satisfied.
	That is, the integral of $\| B \boldsymbol{\tau}(\cdot) \|_{\ell^1}: \Omega \to \R_0^+$ converges to zero as $N \to \infty$. 
	This is a weaker condition than the maximum value of $\| B \boldsymbol{\tau}(\cdot) \|_{\ell^1}$ converging to zero, which was numerically observed to hold for PHS in \cite{bayona2019insight}. 
	The relation between these conditions can be observed by applying H\"older's inequality (see, for instance, \cite[Chapter 3]{rudin1987real}). 
	Let $1 \leq p,q \leq \infty$ with $1/p + 1/q = 1$ and assume that $\omega \in L^q(\Omega)$. 
	Then we have 
	\begin{equation*} 
		\int_{\Omega} \| B \boldsymbol{\tau}(\boldsymbol{x}) \|_{\ell^1} \omega(\boldsymbol{x}) \intd \boldsymbol{x} 
			\leq \left( \int_{\Omega} \| B \boldsymbol{\tau}(\boldsymbol{x}) \|_{\ell^1}^p \intd \boldsymbol{x} \right)^{1/p} 
			\left( \int_{\Omega} \omega(\boldsymbol{x})^q \intd \boldsymbol{x} \right)^{1/q}.
	\end{equation*}  
	Hence, $\| B \boldsymbol{\tau}\|_{\ell^1}$ converging to zero in $L^p(\Omega)$ as $N \to \infty$ for some $p \geq 1$ immediately implies \eqref{eq:relation_weights}. 
	The special case of $p = \infty$ corresponds to \eqref{eq:observation_Bayona}. 
\end{remark}
\section{Numerical results} 
\label{sec:numerical} 

We present a variety of numerical tests in one and two dimensions to demonstrate our theoretical findings.  
A constant weight function $\omega \equiv 1$ is used for simplicity. 
All numerical tests presented here were generated in MATLAB\footnote{See \url{https://github.com/jglaubitz/stability_RBF_CFs}}.

\subsection{Compactly supported RBFs} 
\label{sub:num_compact}

Let us start with demonstrating Theorem \ref{thm:main} in one dimension. 
To this end, we consider Wendland's compactly supported RBFs in $\Omega = [0,1]$.

\begin{figure}[tb]
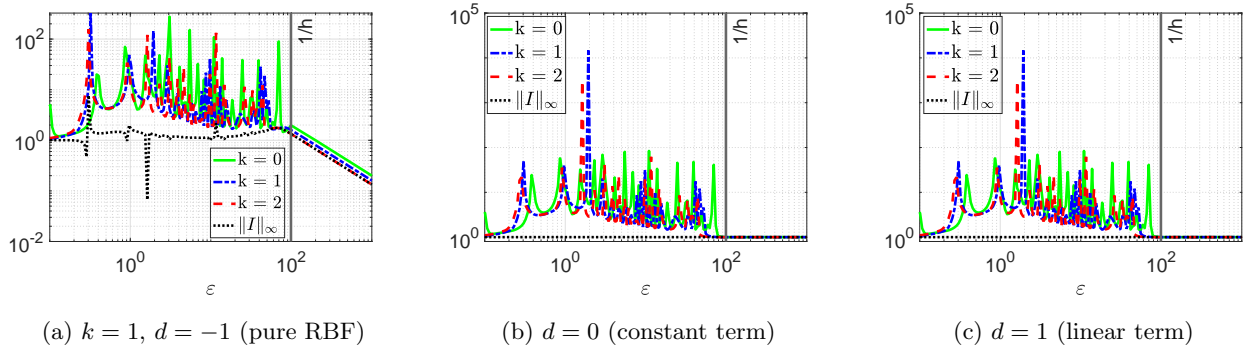

	\centering 
	\begin{subfigure}[b]{0.32\textwidth}
		\includegraphics[width=\textwidth]{%
      	plots/S611_demonstration_N100_equid_noPol} 
    \caption{$k=1$, $d=-1$ (pure RBF)}
    \label{fig:demo_noPol}
  	\end{subfigure}%
	~
  	\begin{subfigure}[b]{0.32\textwidth}
		\includegraphics[width=\textwidth]{%
      	plots/S611_demonstration_N100_equid_d0} 
    \caption{$d=0$ (constant term)}
    \label{fig:demo_d0}
  	\end{subfigure}%
  	~ 
  	\begin{subfigure}[b]{0.32\textwidth}
		\includegraphics[width=\textwidth]{%
      	plots/S611_demonstration_N100_equid_d1} 
    \caption{$d=1$ (linear term)}
    \label{fig:demo_d1}
  	\end{subfigure}%
  	\caption{
  	The stability measure $\|C_N\|_{\infty}$ for Wendland's compactly supported RBF $\varphi_{1,k}$ with smoothness parameters $k=0,1,2$ on $N=100$ equidistant data points. 
	$1/h$ denotes the threshold above which the basis functions have nonoverlapping support. 
  	}
  	\label{fig:demo}
\end{figure} 

Figure \ref{fig:demo} illustrates the stability measure $\|C_N\|_{\infty}$ of Wendland's compactly supported RBF $\varphi_{1,k}$ with smoothness parameters $k=0,1,2$ as well as the optimal stability measure. 
The latter is given by $C_N[1]$ if no constants are included and by $\|I\|_\infty = 1$ if constants are included in the RBF approximation space. 
Furthermore, $N=100$ equidistant data points were used, including the end points, $x_1 = 0$ and $x_N = 1$, and the (reference) shape parameter $\varepsilon$ was allowed to vary. 
Finally, $1/h$ denotes the threshold above which the compactly supported RBFs have nonoverlapping support. 
We note that the RBF-QFs are stable for sufficiently small shape parameters. 
At the same time, we can also observe the RBF-QF be stable for $\varepsilon \geq 1/h$. 
It can be argued that this is in accordance with Theorem \ref{thm:main}. 
Recall that Theorem \ref{thm:main} essentially states that for $\varepsilon \geq 1/h$, and assuming that all basis functions have equal moments ($I[\varphi_n] = I[\varphi_m]$ for all $n,m$), the corresponding RBF-QF (including polynomials of any degree) is stable if a sufficiently large number of equidistribiuted data points is used. 
Here, the equal moments condition was ensured by choosing the shape parameter as $\varepsilon_n = \varepsilon$ for the interior data points ($n=2,\dots,N-1$) and as $\varepsilon_1 = \varepsilon_N = \varepsilon/2$ for the boundary data points. 

\begin{figure}[tb]
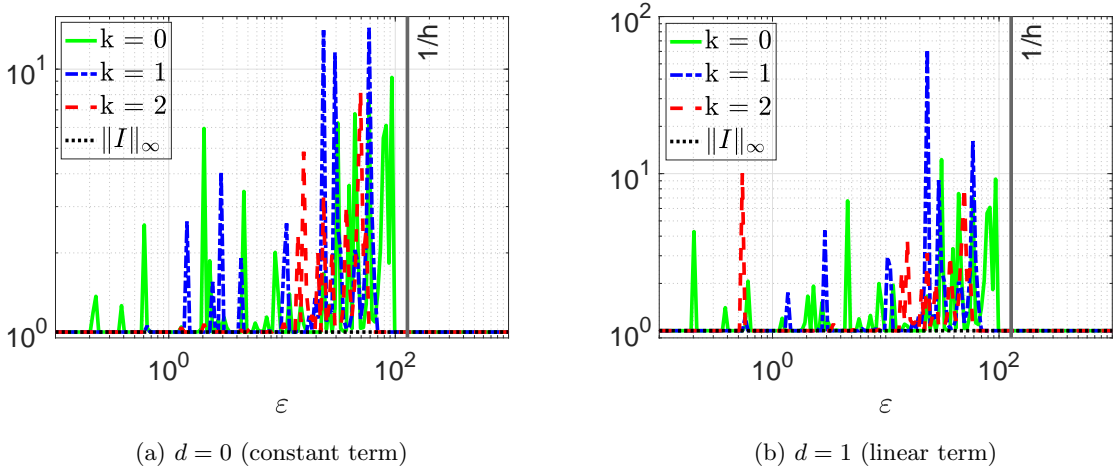

	\centering 
  	\begin{subfigure}[b]{0.45\textwidth}
		\includegraphics[width=\textwidth]{%
      	plots/S613_nonequid_N100_Halton_d0} 
    \caption{$d=0$ (constant term)}
    \label{fig:nonequid_d0}
  	\end{subfigure}%
  	~ 
  	\begin{subfigure}[b]{0.45\textwidth}
		\includegraphics[width=\textwidth]{%
      	plots/S613_nonequid_N100_Halton_d1} 
    \caption{$d=1$ (linear term)}
    \label{fig:nonequid_d1}
  	\end{subfigure}%
  	\caption{
  	The stability measure $\|C_N\|_{\infty}$ for Wendland's compactly supported RBF $\varphi_{1,k}$ with smoothness parameters $k=0,1,2$ on $N=100$ Halton points. 
	$1/h$ denotes the threshold above which the basis functions have nonoverlapping support.
  	}
  	\label{fig:nonequid}
\end{figure}

That said, at least numerically, we observe that it is possible to drop this equal moment condition. 
This is demonstrated by Figure \ref{fig:nonequid}. 
There, we perform the same test as in Figure \ref{fig:demo}, except choosing all the shape parameters to be equal ($\varepsilon_n = \varepsilon$, $n=1,\dots,N$) and going over to nonequidistant Halton points.  
Nevertheless, we can see in Figure \ref{fig:nonequid} that for $\varepsilon \geq 1/h$ the RBF-QFs are still stable. 

\begin{figure}[tb]
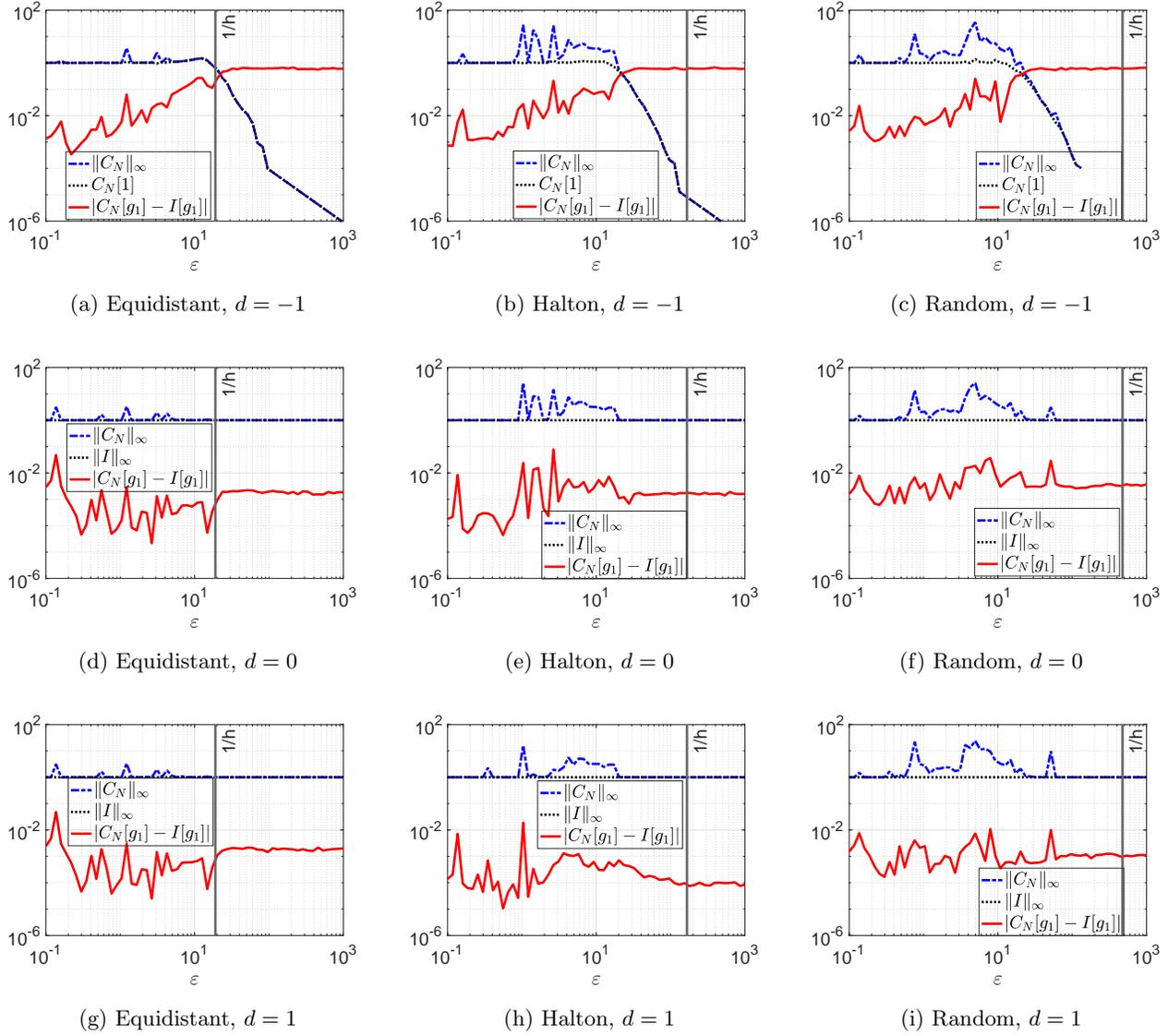

	\centering 
	\begin{subfigure}[b]{0.32\textwidth}
		\includegraphics[width=\textwidth]{%
      	plots/S715_error_2d_Genz1_N400_equid_k1_noPol} 
    \caption{Equidistant, $d=-1$}
    \label{fig:error_2d_Genz1_N400_equid_k1_noPol}
  	\end{subfigure}%
	~
	\begin{subfigure}[b]{0.32\textwidth}
		\includegraphics[width=\textwidth]{%
      	plots/S715_error_2d_Genz1_N400_Halton_k1_noPol} 
    \caption{Halton, $d=-1$}
    \label{fig:error_2d_Genz1_N400_Halton_k1_noPol}
  	\end{subfigure}%
	~ 
	\begin{subfigure}[b]{0.32\textwidth}
		\includegraphics[width=\textwidth]{%
      	plots/S715_error_2d_Genz1_N400_random_k1_noPol} 
    \caption{Random, $d=-1$}
    \label{fig:error_2d_Genz1_N400_random_k1_noPol}
  	\end{subfigure}%
	\\
  	\begin{subfigure}[b]{0.32\textwidth}
		\includegraphics[width=\textwidth]{%
      	plots/S715_error_2d_Genz1_N400_equid_k1_d0} 
    \caption{Equidistant, $d=0$}
    \label{fig:error_2d_Genz1_N400_equid_k1_d0}
  	\end{subfigure}%
	~
	\begin{subfigure}[b]{0.32\textwidth}
		\includegraphics[width=\textwidth]{%
      	plots/S715_error_2d_Genz1_N400_Halton_k1_d0} 
    \caption{Halton, $d=0$}
    \label{fig:error_2d_Genz1_N400_Halton_k1_d0}
  	\end{subfigure}%
	~ 
	\begin{subfigure}[b]{0.32\textwidth}
		\includegraphics[width=\textwidth]{%
      	plots/S715_error_2d_Genz1_N400_random_k1_d0} 
    \caption{Random, $d=0$}
    \label{fig:error_2d_Genz1_N400_random_k1_d0}
  	\end{subfigure}%
	\\
	\begin{subfigure}[b]{0.32\textwidth}
		\includegraphics[width=\textwidth]{%
      	plots/S715_error_2d_Genz1_N400_equid_k1_d1} 
    \caption{Equidistant, $d=1$}
    \label{fig:error_2d_Genz1_N400_equid_k1_d1}
  	\end{subfigure}%
	~
	\begin{subfigure}[b]{0.32\textwidth}
		\includegraphics[width=\textwidth]{%
      	plots/S715_error_2d_Genz1_N400_Halton_k1_d1} 
    \caption{Halton, $d=1$}
    \label{fig:error_2d_Genz1_N400_Halton_k1_d1}
  	\end{subfigure}%
	~ 
	\begin{subfigure}[b]{0.32\textwidth}
		\includegraphics[width=\textwidth]{%
      	plots/S715_error_2d_Genz1_N400_random_k1_d1} 
    \caption{Random, $d=1$}
    \label{fig:error_2d_Genz1_N400_random_k1_d1}
  	\end{subfigure}%
  	\caption{
	Error analysis for Wendland's compactly supported RBF $\varphi_{2,k}$ and the first Genz test function $g_1$ on $\Omega = [0,1]^2$; see \eqref{eq:Genz}.  
	In all cases, $N=400$ data points (equidistant, Halton, or random) were considered. 
	$1/h$ denotes the threshold above which the basis functions have nonoverlapping support.
  	}
  	\label{fig:error_2d_Genz1_k1}
\end{figure}

Next, we extend our numerical tests to the following Genz test functions \cite{genz1984testing} (also see \cite{van2020adaptive}) on $\Omega = [0,1]^q$: 
\begin{equation}\label{eq:Genz}
\begin{aligned}
	& g_1(\boldsymbol{x}) 
			= \cos\left( 2 \pi b_1 + \sum_{i=1}^q a_i x_i \right), \quad 
	&& g_2(\boldsymbol{x}) 
			= \prod_{i=1}^q \left( a_i^{-2} + (x_i - b_i)^2 \right)^{-1} ,\\
	&g_3(\boldsymbol{x}) 
			= \left( 1 + \sum_{i=1}^q a_i x_i \right)^{-(q+1)}, \quad 
	&& g_4(\boldsymbol{x}) 
			= \exp \left( - \sum_{i=1}^q a_i^2 ( x_i - b_i )^2 \right)
\end{aligned}
\end{equation} 
Here, $q$ denotes the dimension under consideration and is henceforth chosen as $q=2$. 
These functions are designed to have different complex characteristics for numerical integration routines.
The vectors $\mathbf{a} = (a_1,\dots,a_q)^T$ and $\mathbf{b} = (b_1,\dots,b_q)^T$ respectively contain (randomly chosen) shape and translation parameters. 
For each case, the experiment was repeated $100$ times. 
At the same time, for each experiment, the vectors $\mathbf{a}$ and $\mathbf{b}$ were drawn randomly from $[0,1]^2$. 
For reasons of space, we only report the results for $g_1$ and $k=1$ in Figure \ref{fig:error_2d_Genz1_k1}. 
As before, the smallest errors are found for shape parameters corresponding to the stable RBF-QF.  
The results for $g_2, g_3, g_4$ and $k=0,2$ were similar and are therefore not reported here.  
Since it might be hard to identify the smallest errors as well as the corresponding shape parameter and stability measure from Figure \ref{fig:error_2d_Genz1_k1}, these are listed in Table \ref{tab:min_error_Wendland} for $d=0,1$ together with the corresponding values for the fourth Genz test function $g_4$. 

\begin{table}[htb]
\renewcommand{\arraystretch}{1.4}
\centering 
  	\begin{tabular}{c c c c c c c c c} 
	\toprule 
	& & \multicolumn{3}{c}{$g_1$} & & \multicolumn{3}{c}{$g_4$} \\ \hline 
	& & $e_{\text{min}}$ & $\varepsilon$ & $\|C_N\|_{\infty}$ 
	& & $e_{\text{min}}$ & $\varepsilon$ & $\|C_N\|_{\infty}$ \\ \hline 
	& & \multicolumn{7}{c}{Equidistant Points} \\ \hline 
	$d=0$ 	& & 2.2e-05 & 2.6e+00 & 1.0e+00 
				& & 6.1e-05 & 2.6e+00 & 1.0e+00 \\ 
	$d=1$ 	& & 2.2e-05 & 2.6e+00 & 1.0e+00 			
				& & 6.2e-05 & 2.6e+00 & 1.0e+00 \\ \hline 
	& & \multicolumn{7}{c}{Halton Points} \\ \hline 
	$d=0$ 	& & 4.7e-05 & 5.5e-01 & 1.0e+00 
				& & 1.9e-05 & 5.5e-01 & 1.0e+00 \\ 
	$d=1$ 	& & 1.1e-05 & 5.5e-01 & 1.0e+00 
				& & 1.6e-05 & 5.5e-01 & 1.0e+00 \\ \hline 
	& & \multicolumn{7}{c}{Random Points} \\ \hline 
	$d=0$ 	& & 6.0e-04 & 2.5e-01 & 1.0e+00 
				& & 1.6e-04 & 2.9e-01 & 1.0e+00 \\ \hline 
	$d=1$ 	& & 2.2e-04 & 4.0e-01 & 1.0e+00 
				& & 1.7e-04 & 4.0e-01 & 1.0e+00 \\
	\bottomrule
\end{tabular} 
\caption{Minimal errors, $e_{\text{min}}$, for the first and fourth Genz test function, $g_1$ and $g_4$, together with the corresponding shape parameter, $\varepsilon$, and stability measure, $\|C_N\|_{\infty}$. 
	Wendland's compactly supported RBF with smoothness parameter $k=1$ was used in all cases.}
\label{tab:min_error_Wendland}
\end{table}

We see in Figure \ref{fig:error_2d_Genz1_k1} that for $d=-1$ and increasing $\varepsilon$, the error increases. 
This is because the supports of the translated kernels (disks in 2d) become smaller, resulting in ``holes" in the pure RBF interpolant, i.\,e., regions where it is zero. 
In Figure \ref{fig:error_2d_Genz1_N400_random_k1_noPol}, for random points and $d=-1$, the supports become so small that all the quadrature weights become zero.
The holes vanish if at least a constant is included in the RBF interpolant, which explains the reduced errors for the same value of $\varepsilon$ when $d=0$ or $d=1$. 
Finally, even for nonoverlapping supports ($\varepsilon = 1/h$), the area of the holes in the pure RBF part of the interpolant can converge to zero\footnote{Assuming the sequence of points is dense in $\Omega$} as $N \to \infty$. 

\begin{remark} 
	 If the RBF interpolant $s_{N,d}f$ convergences to $f$ in $L^1(\Omega)$ as $N \to \infty$, we get 
	 \begin{equation*}
	 	\left| C_N[f] - I[f] \right| 
			= \left| I[s_{N,d}f] - I[f] \right| 
			\leq \int_{\Omega} | (s_{N,d}f)(\boldsymbol{x}) - f(\boldsymbol{x}) | \intd \boldsymbol{x} 
			\to 0, \quad N \to \infty,
	 \end{equation*} 
	 and therefore $C_N[f] \to I[f]$ as $N \to \infty$. 
	 For convergence results of RBF interpolants, we refer to the monographs \cite{buhmann2000radial,wendland2004scattered,fasshauer2007meshfree}. 
	 That said, we point out that the convergence of $s_{N,d}f$ to $f$ depends on the area not covered by the supports. 
	 Let us denote the area that is covered by the supports by $\Omega_{\rm supp(\varphi,X_N,\varepsilon)}$, then the area that is not covered by the supports is $\Omega \setminus \Omega_{\rm supp(\varphi,X_N,\varepsilon)}$. 
	 A rough but simple lower bound for the $L^1(\Omega)$-error of $s_{N,d}f$ and $f$ is as follows. 
	 Note that the RBF interpolant is zero on $\Omega \setminus \Omega_{\rm supp(\varphi,X_N,\varepsilon)}$ and thus 
	 \begin{equation}\label{eq:lower_bound_holes}
	 	\int_{\Omega} | (s_{N,d}f)(\boldsymbol{x}) - f(\boldsymbol{x}) | \intd \boldsymbol{x} 
			\geq \int_{\Omega \setminus \Omega_{\rm supp(\varphi,X_N,\varepsilon)}} | f(\boldsymbol{x}) | \intd \boldsymbol{x},
	 \end{equation}
	 where the right-hand side is the average absolute value of $f$ in the area not covered by the supports.  
	 \eqref{eq:lower_bound_holes} indicates that convergence includes the rate with which ``holes" go to zero. 
\end{remark}
	 
In Figures \ref{fig:area_holes_noPol} and \ref{fig:area_holes_constants}, we relate the error in computing the integral of $g_1$ on $\Omega = [0,1]^2$ to the portion of the domain that is not covered by the possibly overlapping supports of the Wendland functions. 
For equidistant points, the horizontal/vertical distance between adjacent points is (i.e. $\mathbf{x}_{ij}$ and $\mathbf{x}_{(i\pm1)j}$ or $\mathbf{x}_{ij}$ and $\mathbf{x}_{i(j\pm1)}$) is $1/(\sqrt{N}-1)$, so as long as $\varepsilon>2(\sqrt{N}-1)$ the circles do not overlap and the total area that is not covered by the supports is given by
\begin{align}
    1-\frac{\pi}{\varepsilon^{2}}(\sqrt{N}-1)^{2}.\nonumber
\end{align}
Once $\varepsilon\leq2(\sqrt{N}-1)$, the circles overlap, and the total area that is not covered by the supports becomes
\begin{align}
    1-\frac{\pi}{\varepsilon^{2}}(\sqrt{N}-1)^{2}+\frac{2(\theta-\mbox{sin}(\theta))}{\varepsilon^{2}}(\sqrt{N}-1)^{2},\nonumber
\end{align}
where
\begin{align*}
    \theta=2\mbox{sin}^{-1}\left(\frac{\sqrt{4(\sqrt{N}-1)^{2}-\varepsilon^{2}}}{2(\sqrt{N}-1)}\right).\nonumber
\end{align*}
Finally, when $\varepsilon\leq\sqrt{2}(\sqrt{N}-1)$, the area that is not covered by the supports is 0.  
Figure \ref{fig:area_holes_noPol} illustrates the error (left frame) and the area not covered by the supports (right frame) in this situation for various $N$ and $\varepsilon$. 
The dashed lines represent the cases $\varepsilon=2(\sqrt{N}-1)$ and $\varepsilon=\sqrt{2}(\sqrt{N}-1)$. 
On the other hand, Figure \ref{fig:area_holes_constants} illustrates the same test for an RBF interpolant that includes a constant. 
It demonstrates the improvement when the constant basis element covers the holes.

\begin{figure}[tb]
	\centering 
	\begin{subfigure}[b]{0.95\textwidth}
		\includegraphics[width=\textwidth]{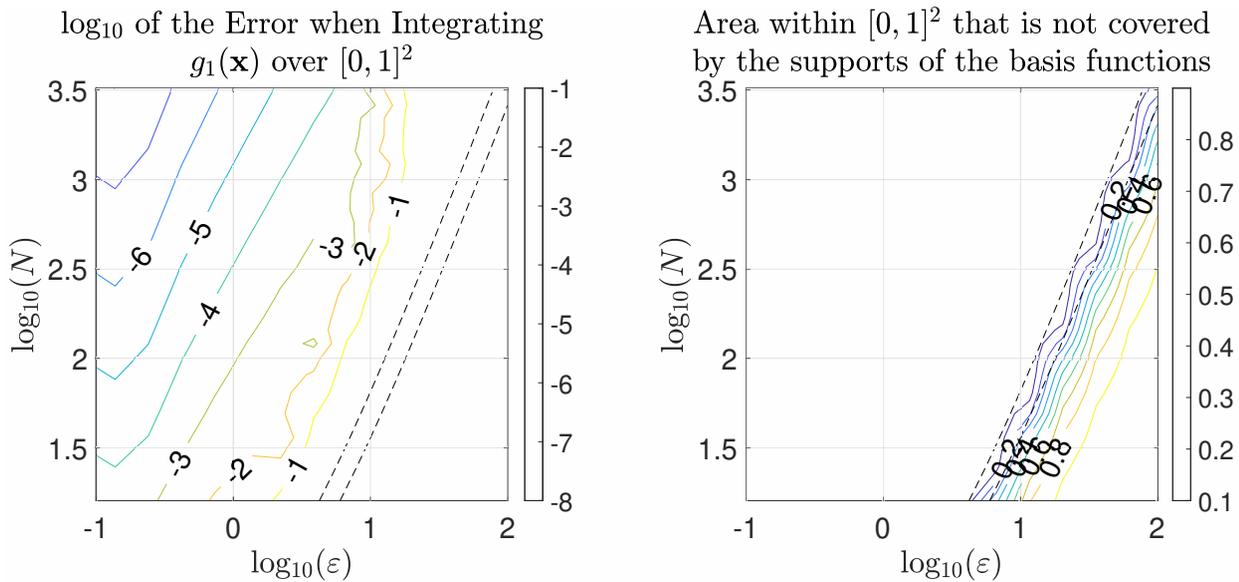} 
  	\end{subfigure}%
  	\caption{Error of $C_N[g_1]$ on $\Omega = [0,1]^2$ and the area not covered by the supports of the Wendland functions for various $N$ and $\varepsilon$ when no constant is included in the RBF interpolant}
    \label{fig:area_holes_noPol}
\end{figure} 

\begin{figure}[tb]
	\centering 
	\begin{subfigure}[b]{0.95\textwidth}
		\includegraphics[width=\textwidth]{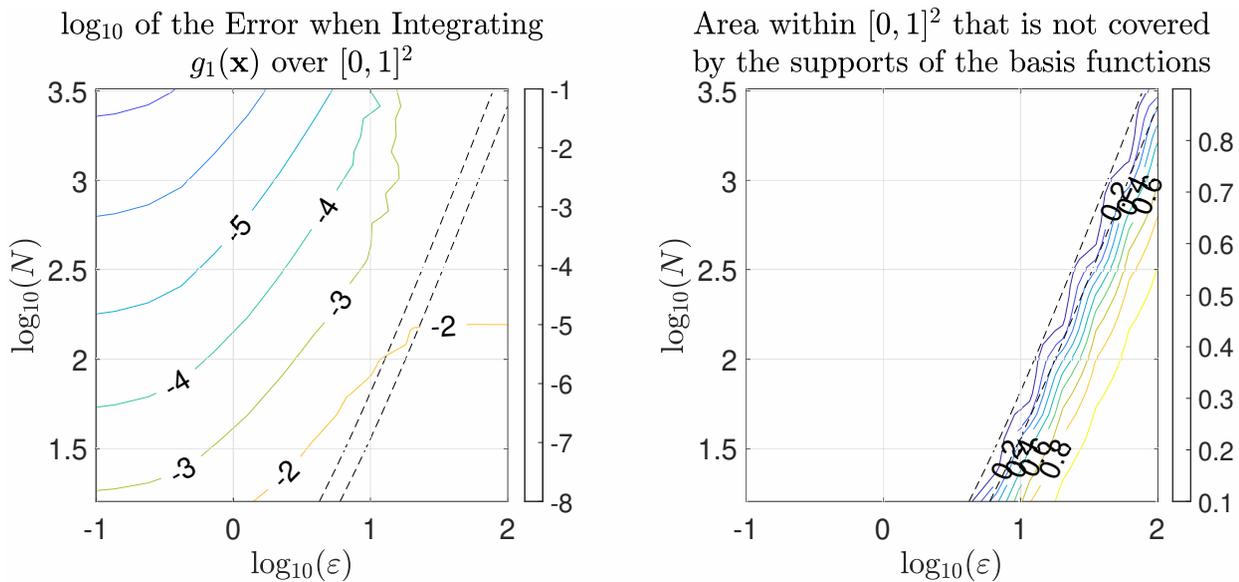} 
  	\end{subfigure}%
  	\caption{Error of $C_N[g_1]$ on $\Omega = [0,1]^2$ and the area not covered by the supports of the Wendland functions for various $N$ and $\varepsilon$ when a constant is included in the RBF interpolant}
    \label{fig:area_holes_constants}
\end{figure}


\subsection{Stable LSRBF-QFs} 
\label{sub:num_LSRBF}

We demonstrate that the least-squares approach discussed in \S5 can stabilize RBF-QFs. 
We repeat that stable LSRBF-QF can be constructed for any RBF function space as long as we are willing to oversample, i.\,e., the number of data points used by the quadrature is larger than the dimension of the RBF function space. 
In other words, there are more data points than center points. 
Notably, oversampling was used in some recent works \cite{tominec2021least,tominec2021stability,glaubitz2022energy} to stabilize RBF methods for partial differential equations, and it would be of interest to combine this with LSRBF-QF in future works.
Here, we demonstrate the possible advantage of LSRBF-QFs compared to interpolatory RBF-QFs (data and center points are the same) for 
the RBF function space spanned by a constant and the functions $\varphi(\varepsilon \|\boldsymbol{x} - \mathbf{y}_m\|)$ on $\Omega = [0,1]^2$ using a Gaussian kernel $\varphi(r) = \exp(-r^2)$ and a constant (independent of the center and data points) shape parameter $\varepsilon = 0.8$. 
The center and data points, $\{ \mathbf{y}_m \}_{m=1}^M$ and $\{\mathbf{x}_n\}_{n=1}^N$, are chosen as the first $M$ and $N$ elements of the same sequence of random points, respectively. 

\begin{figure}[tb]
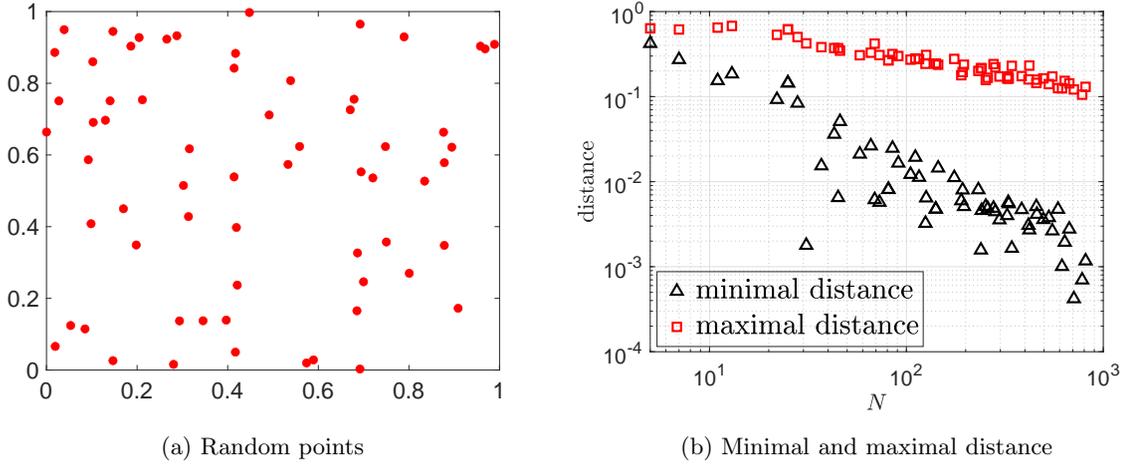

	\centering
  	\begin{subfigure}[b]{0.45\textwidth}
		\includegraphics[width=\textwidth]{%
      		plots/points_random} 
    		\caption{Random points}
    		\label{fig:points_random}
  	\end{subfigure}%
	~
	\begin{subfigure}[b]{0.45\textwidth}
		\includegraphics[width=\textwidth]{%
      		plots/distance_random} 
    		\caption{Minimal and maximal distance}
    		\label{fig:distance_random}
  	\end{subfigure}%
  	\caption{
		Random points and the corresponding minimal and maximal distance, \eqref{eq:min_distance} and \eqref{eq:max_distance}. 
		The minimal distance is the smallest distance between any two distinct points. 
		The maximal distance is the largest distance between any point and the closest distinct point.  
	}
  	\label{fig:LSRBF_points}
\end{figure} 

Figure \ref{fig:points_random} illustrates the first $64$ random points from the sequence used in our tests. 
Figure \ref{fig:distance_random} visualizes the minimal and maximal distance of the random data point set $X_N = \{ \mathbf{x}_n \}_{n=1}^N$ for different values for $N$. 
We define the minimal distance of $X_N$, denoted by $h_{\rm min}(X_N)$, as the smallest distance between any two distinct points, 
\begin{equation}\label{eq:min_distance}
	h_{\rm min}(X_N) = \min_{\mathbf{x}_n \in X_N} \ \min_{\mathbf{x}_m \in X_N \setminus \mathbf{x}_n} \| \mathbf{x}_m - \mathbf{x}_n \|_2.
\end{equation} 
At the same time, we define the maximal (filling) distance of $X_N$, denoted by $h_{\rm max}(X_N)$, as the largest distance between any point and the closest distinct point, 
\begin{equation}\label{eq:max_distance}
	h_{\rm max}(X_N) = \max_{\mathbf{x}_n \in X_N} \ \min_{\mathbf{x}_m \in X_N \setminus \mathbf{x}_n} \| \mathbf{x}_m - \mathbf{x}_n \|_2.
\end{equation} 

\begin{figure}[tb]
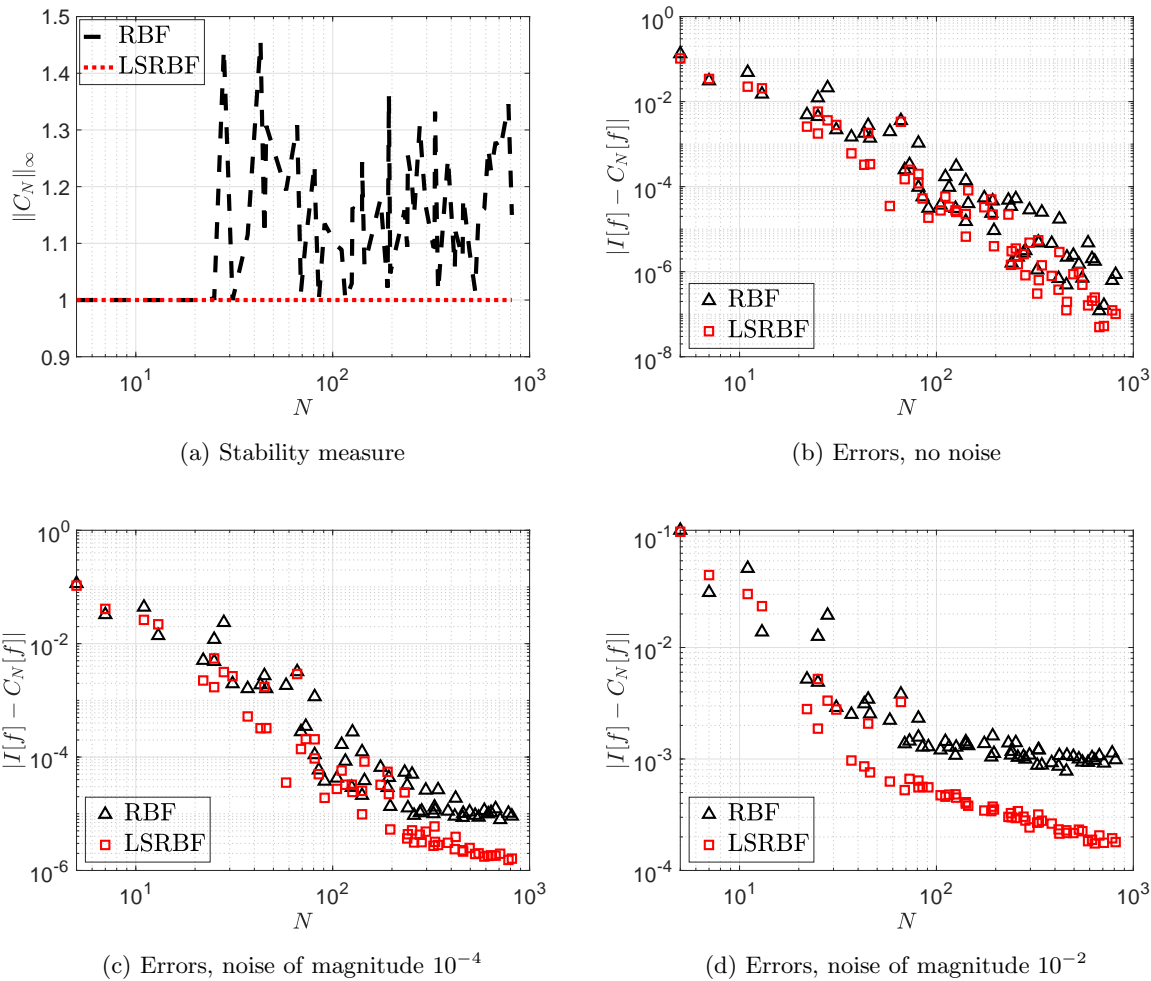

	\centering
  	\begin{subfigure}[b]{0.45\textwidth}
		\includegraphics[width=\textwidth]{%
      		plots/LSRBF_stabMeasure} 
    		\caption{Stability measure}
    		\label{fig:LSRBF_stabMeasure}
  	\end{subfigure}%
	~
	\begin{subfigure}[b]{0.45\textwidth}
		\includegraphics[width=\textwidth]{%
      		plots/LSRBF_Genz1_noNoise} 
    		\caption{Errors, no noise}
    		\label{fig:LSRBF_Genz1_noNoise}
  	\end{subfigure}%
	\\ 
	\begin{subfigure}[b]{0.45\textwidth}
		\includegraphics[width=\textwidth]{%
      		plots/LSRBF_Genz1_Noise_Eminus4} 
    		\caption{Errors, noise of magnitude $10^{-4}$}
    		\label{fig:LSRBF_Genz1_Noise_Eminus4}
  	\end{subfigure}%
	~
	\begin{subfigure}[b]{0.45\textwidth}
		\includegraphics[width=\textwidth]{%
      		plots/LSRBF_Genz1_Noise_Eminus2} 
    		\caption{Errors, noise of magnitude $10^{-2}$}
    		\label{fig:LSRBF_Genz1_Noise_Eminus2}
  	\end{subfigure}%
  	\caption{
		Stability measure and errors for Genz' first test function $g_1$ on $\Omega = [0,1]^2$ with $\omega \equiv 1$ using an interpolatory RBF-QF and a stable LSRBF-QF. 
		Random points and a Gaussian kernel with a constant shape parameter were used. 
	}
  	\label{fig:LSRBF}
\end{figure} 

Figure \ref{fig:LSRBF_stabMeasure} provides the values of the stability measure for the interpolatory RBF-QF (``RBF") and the stable LSRBF-QF (``LSRBF"). 
For the same center points (same RBF function space for which the quadrature is exact), the LSRBF-QF uses more data points to evaluate the integrand than the interpolatory RBF-QF. 
The other way around, for the same data points, the interpolatory RBF-QF is exact for a larger RBF function space than the LSRBF-QF. 
At the same time, the interpolatory RBF-QF is found to have a suboptimal stability measure (due to negative weights), which results in stability issues. 
In contrast, in all cases, the LSRBF-QF has an optimal stability measure (due to the weights being positive). 
Further, Figure \ref{fig:LSRBF_Genz1_noNoise} reports on the errors of the interpolatory RBF-QF and the stable LSRBF-QF on $N$ random points applied to Genz' first test function $g_1$ on $\Omega = [0,1]^2$ with $\omega \equiv 1$. 
In this example, both formulas perform similarly. 
In Figures \ref{fig:LSRBF_Genz1_Noise_Eminus4} and \ref{fig:LSRBF_Genz1_Noise_Eminus2}, we repeated this experiment but added uniformly distributed noise of magnitude $10^{-4}$ and $10^{-2}$ to the function values at the data points. 
The accuracy of the interpolatory RBF-QF deteriorates notably stronger than that of the LSRBF-QF in the presence of noise due to the improved stability of the latter. 
We made the same observation also for other point distributions and Genz test functions. 

\begin{figure}[tb]
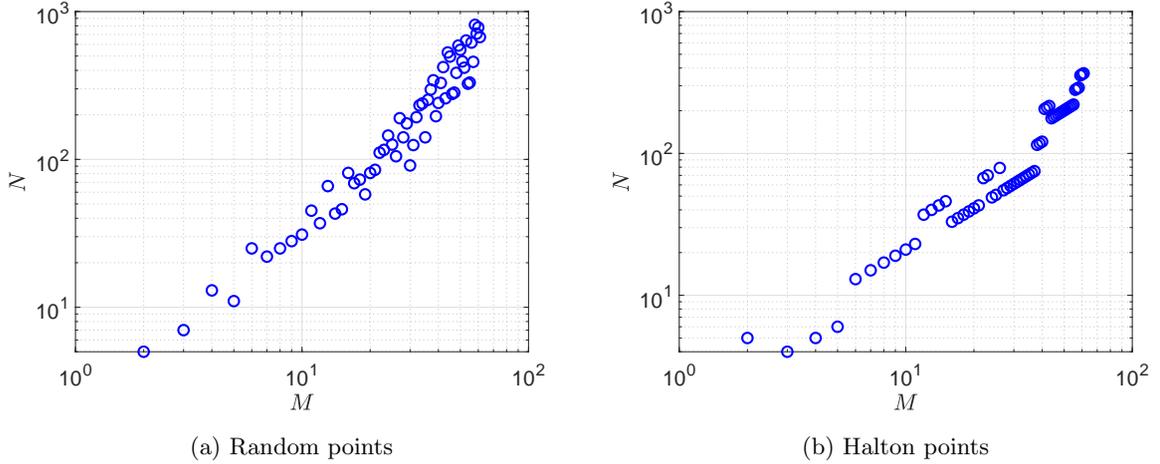

	\centering
  	\begin{subfigure}[b]{0.45\textwidth}
		\includegraphics[width=\textwidth]{%
      		plots/ratio_random} 
    		\caption{Random points}
    		\label{fig:ratio_random}
  	\end{subfigure}%
	~
	\begin{subfigure}[b]{0.45\textwidth}
		\includegraphics[width=\textwidth]{%
      		plots/ratio_Halton} 
    		\caption{Halton points}
    		\label{fig:ratio_Halton}
  	\end{subfigure}%
  	\caption{
		The smallest number of random/Halton data points, $N$, needed to find a positive LSRBF-QF that is exact for the RBF approximation space induced by the first $M$ random/Halton center points. 
	}
  	\label{fig:LSRBF_ratio}
\end{figure} 

\begin{figure}[tb]
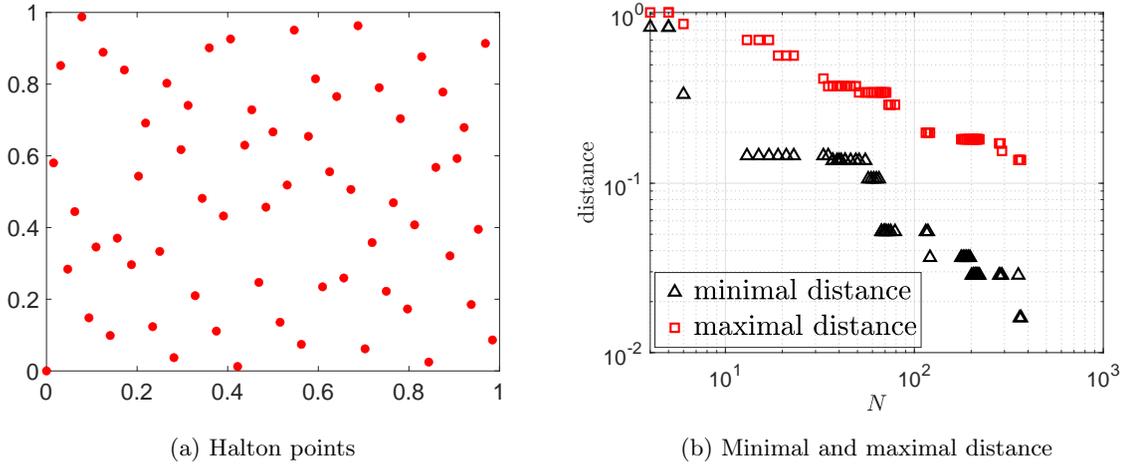

	\centering
  	\begin{subfigure}[b]{0.45\textwidth}
		\includegraphics[width=\textwidth]{%
      		plots/points_Halton} 
    		\caption{Halton points}
    		\label{fig:points_Halton}
  	\end{subfigure}%
	~
	\begin{subfigure}[b]{0.45\textwidth}
		\includegraphics[width=\textwidth]{%
      		plots/distance_Halton} 
    		\caption{Minimal and maximal distance}
    		\label{fig:distance_Halton}
  	\end{subfigure}%
  	\caption{
		Halton points and the corresponding minimal and maximal distance, \eqref{eq:min_distance} and \eqref{eq:max_distance}. 
		The minimal distance is the smallest distance between any two distinct points. 
		The maximal distance is the largest distance between any point and the closest distinct point.  
	}
  	\label{fig:LSRBF_Halton}
\end{figure} 

The LSRBF-QF having an optimal stability measure (being positive) for sufficiently large $N$ can be explained by Corollary \ref{cor:LSRBFQF} since we are given a compact domain, a positive weight function, and a function space of continuous and bounded functions that contains constants. 
Figure \ref{fig:LSRBF_ratio} reports the smallest number of random/Halton data points $N$ we needed to find a positive LSRBF-QF that is exact for the RBF approximation space induced by using the first $M$ random/Halton points as centers. 
To also illustrate the semi-random Halton points, Figure \ref{fig:LSRBF_Halton} visualizes the first $64$ Halton points and their minimal and maximal distance for an increasing number $N$. 
Considering the model ``$N = C \cdot M^s$" and performing a least-squares fit for the parameters $C$ and $s$ given the data illustrated in Figure \ref{fig:LSRBF_ratio} revealed the following: 
For random data and center points, we found $C \approx 2.1 \cdot 10^{-1}$ and $s \approx 1.9$. 
For Halton data and center points, we found $C \approx 4.9 \cdot 10^{-2}$ and $s \approx 2.1$. 
In both cases, we found $N$ to be roughly linearly proportional to the squared dimension of the approximation space for which the positive least-squares quadrature is exact. 
Similar ratios were also observed in \cite{huybrechs2009stable,glaubitz2020stable,glaubitz2020stableQFs,glaubitz2020shock,glaubitz2020stableCFs,cohen2013stability,cohen2017optimal,migliorati2022stable,glaubitz2020constructing}. 
It might be argued that the observed ratio between $N$ and $M$ is necessary for the LSRBF-QF to avoid inherent stability issues predicted by the 'impossibility' theorem proved in \cite{platte2011impossibility}, which states that any procedure for approximating univariate functions from equally spaced samples that converges exponentially fast must also be exponentially ill-conditioned.

Finally, we address the convergence rate observed in Figure \ref{fig:LSRBF_Genz1_noNoise}. 
In theory, Gaussian RBF interpolants can converge almost exponentially fast\footnote{%
For a function from the appropriate native function space, the $L^\infty(\Omega)$-error between the function and its RBF interpolant is in $\mathcal{O}( \exp( -C \log h_{\rm max}(X_N) / h_{\rm max}(X_N) ) )$; see \cite{wendland2004scattered}.
} in the $L^\infty(\Omega)$ with the maximal (filling) distance. 
For simplicity, we considered the model ``$|I[f] - C_N[f]| = \exp( -C h_{\rm max}(X_N)^s )$" and performing a least-squares fit for the parameters $C$ and $s$ using the data presented in Figure \ref{fig:LSRBF_Genz1_noNoise}. 
For the interpolatory RBF-QF, we found $C \approx 2.0$ and $s \approx -1.1$. 
For the LSRBF-QF, we found $C \approx 1.8$ and $s \approx -1.3$. 
Both QFs converge roughly exponentially, with the LSRBF-QF converging slightly faster than the interpolatory RBF-QF, even in the noiseless case. 
A more general comment on the convergence of LSRBF-QFs is offered in Remark \ref{rem:conv_LSRBFQF}.

\begin{remark}[Convergence of LSRBF-QF]\label{rem:conv_LSRBFQF}
	Assume that the positive LSRBF-QF $C_N$ on $\Omega$ is exact for all functions from the RBF approximation space $\mathcal{S}_{M,d}(\Omega)$ with $d \geq 0$. 
	Due to $C_N$ being positive and exact for constants, we have $\|C_N\|_{\infty} = \|I\|_\infty$ and the Lebesgue inequality \eqref{eq:L-inequality} implies 
	\begin{equation*}\label{eq:L-inequality2} 
		| C_N[f] - I[f] | 
			\leq 2 \| I \|_{\infty} \left( \inf_{ s \in \mathcal{S}_{M,d}(\Omega) } \norm{ f - s }_{L^{\infty}(\Omega)} \right) 
	\end{equation*} 
	for any continuous $f: \Omega \to \R$. 
	Now consider a sequence of positive LSRBF-QFs $(C_N)_{N \in \N}$ with $C_N$ being exact for $\mathcal{S}_{M,d}(\Omega)$ with $M = M(N)$. 
	Assume that $\mathcal{S}_{M,d}(\Omega) \subset \mathcal{S}_{M+1,d}(\Omega)$ for all $M \in \N$ and that the given function $f$ lies in $\bigcup_{M \in \N} \mathcal{S}_{M,d}(\Omega)$. 
	Thus, if $M(N) \to \infty$ for $N \to \infty$, then $(C_N[f])_{N \in \N}$ converges to $I[f]$ as $N \to \infty$. 
	Let us now assume that the ratio between $M$ and $N$ is of the form $N = \mathcal{O}(M^2)$, which we numerically observed to be true. 
	The convergence rate of the LSRBF-QF for $f$ is then the square root of the convergence rate of the best approximation of $f$ from the sequence of approximation spaces $( \mathcal{S}_{M,d}(\Omega) )_{N \in \N}$ for the $L^{\infty}(\Omega)$-norm.
\end{remark}

\subsection{Polyharmonic splines} 
\label{sub:num_PHS}

\begin{figure}[tb]
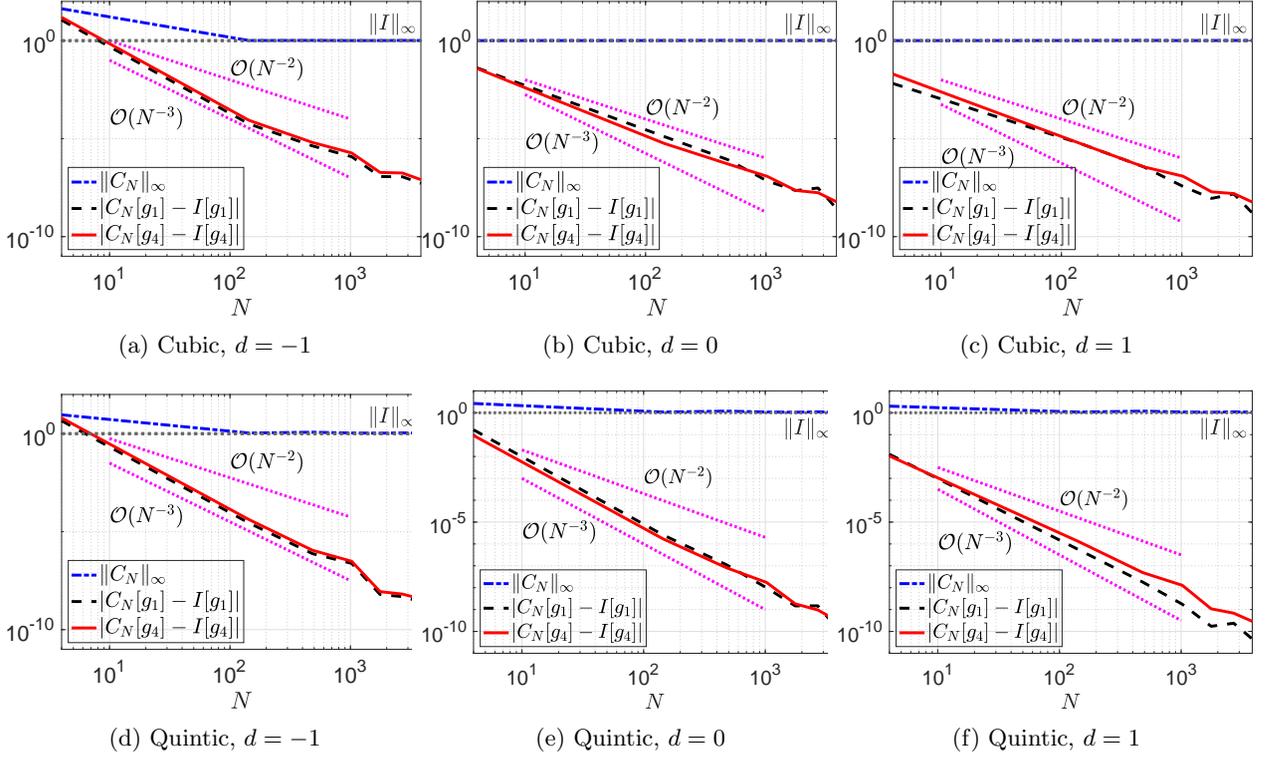

	\centering 
	\begin{subfigure}[b]{0.32\textwidth}
		\includegraphics[width=\textwidth]{%
      	plots/S73_cubic_Genz14_Halton_noPol} 
    \caption{Cubic, $d=-1$}
    \label{fig:S73_cubic_Genz14_Halton_noPol}
  	\end{subfigure}%
	\begin{subfigure}[b]{0.32\textwidth}
		\includegraphics[width=\textwidth]{%
      	plots/S73_cubic_Genz14_Halton_d0} 
    \caption{Cubic, $d=0$}
    \label{fig:S73_cubic_Genz14_Halton_d0}
  	\end{subfigure}%
	\begin{subfigure}[b]{0.32\textwidth}
		\includegraphics[width=\textwidth]{%
      	plots/S73_cubic_Genz14_Halton_d1} 
    \caption{Cubic, $d=1$}
    \label{fig:S73_cubic_Genz14_Halton_d1}
  	\end{subfigure}%
	\\
  	\begin{subfigure}[b]{0.32\textwidth}
		\includegraphics[width=\textwidth]{%
      	plots/S73_quintic_Genz14_Halton_noPol} 
    \caption{Quintic, $d=-1$}
    \label{fig:S73_quintic_Genz14_Halton_noPol}
  	\end{subfigure}%
	\begin{subfigure}[b]{0.32\textwidth}
		\includegraphics[width=\textwidth]{%
      	plots/S73_quintic_Genz14_Halton_d0} 
    \caption{Quintic, $d=0$}
    \label{fig:S73_quintic_Genz14_Halton_d0}
  	\end{subfigure}%
	\begin{subfigure}[b]{0.32\textwidth}
		\includegraphics[width=\textwidth]{%
      	plots/S73_quintic_Genz14_Halton_d1} 
    \caption{Quintic, $d=1$}
    \label{fig:S73_quintic_Genz14_Halton_d1}
  	\end{subfigure}%
  	\caption{
	Error analysis for the cubic ($\varphi(r) = r^3$) and quintic ($\varphi(r) = r^5$) PHS RBF in two dimensions using Halton points. 
	The first and fourth Genz test functions $g_1, g_4$ were considered on $\Omega = [0,1]^2$; see \eqref{eq:Genz}. 
  	}
  	\label{fig:error_PHS_Genz14}
\end{figure} 

We end this section by providing a similar investigation for PHS. 
Again, the first and fourth Genz test functions on $\Omega = [0,1]^2$ are considered. 
However, no shape parameter is involved for PHS, and we consider their stability and accuracy for an increasing number of Halton points. 
Figure \ref{fig:error_PHS_Genz14} shows the results for the cubic ($\varphi(r) = r^3$) and quintic ($\varphi(r) = r^5$) PHS RBF. 
We either added no polynomials ($d=-1$) or polynomial terms of order $d=0$ and $d=1$. 
Figure \ref{fig:error_PHS_Genz14} shows that all RBF-QFs converge while being stable or at least asymptotically stable, independent of the added polynomial term. 
In particular, we see that adding polynomial terms does not affect the asymptotic stability of the PHS RBF-QFs, per our results from \S\ref{sec:connection}.  
Finally, we see that the convergence rate of the RBF-QF depends on the kernel rather than being governed by the polynomial degree $d$. 
The added polynomial term reduces the error of the PHS RBF-QF but not the convergence rate. 
We observe second-order convergence for the cubic PHS RBF and third-order convergence for the quintic PHS RBF.\footnote{In Figure \ref{fig:S73_cubic_Genz14_Halton_noPol} the cubic PHS RBF-QF first shows third-order convergence before it then settles for second-order convergence. We believe that the observed initial third-order decrease in the error is a combination of the second-order approximation rate of the cubic PHS-RBF interpolant and the decreasing Lebesgue constant $\|C_N\|_{\infty}$ in \eqref{eq:L-inequality}. Once the QF is stable ($\|C_N\|_{\infty} = \|I\|_{\infty}$), the second-order approximation rate dominates the error of the QF, and we thus start to observe second-order convergence.}
\section{Concluding thoughts} 
\label{sec:summary} 

In this work, we investigated stability of RBF-QFs. 
We started by showing that stability of RBF-QFs can be connected to the famous Lebesgue constant of the underlying RBF interpolant. 
This indicates that RBF-QFs might benefit from low Lebesgue constants. 
Furthermore, stability was proven for RBF-QFs based on compactly supported RBFs under the assumption of a sufficiently large number of (equidistributed) data points and the shape parameter(s) lying above a certain threshold. 
Finally, we showed that under certain conditions, asymptotic stability of RBF-QFs is independent of polynomial terms included in RBF approximations. 
A series of numerical tests accompanied the above findings.

\section*{Acknowledgements}
JG was supported by AFOSR \#F9550-18-1-0316 and ONR \#N00014-20-1-2595.
We thank Toni Karvonen for pointing out the connection between RBF-QFs and Bayesian quadrature.

\appendix 
\section{Moments} 
\label{sec:app_moments} 

Henceforth, we provide the moments for different RBFs. 
The one-dimensional case is discussed in \S\ref{sub:app_moments_1d}, while two-dimensional moments are derived in \S\ref{sub:app_moments_2d}.

\subsection{One-dimensional moments}
\label{sub:app_moments_1d}

Let us consider the one-dimensional case of $\Omega = [a,b]$ and distinct data points $x_1,\dots,x_N \in [a,b]$.

\subsubsection{Gaussian RBF} 

For $\varphi(r) = \exp( - \varepsilon^2 r^2 )$, the moment of the translated Gaussian RBF, 
\begin{equation}\label{eq:moment_1d_G}
	m_n = m(\varepsilon,x_n,a,b) = \int_a^b \exp( - \varepsilon^2 | x - x_n |^2 ) \intd x,
\end{equation} 
is given by 
\begin{equation*} 
	m_n = \frac{\sqrt{\pi}}{2 \varepsilon} \left[ \mathrm{erf}( \varepsilon(b-x_n) ) - \mathrm{erf}( \varepsilon(a-x_n) ) \right].
\end{equation*} 
Here, $\mathrm{erf}(x) = 2/\sqrt{\pi} \int_0^x \exp( -t^2 ) \intd t$ denotes the usual \emph{error function}, \cite[Section 7.2]{dlmf2021digital}.

\subsubsection{Polyharmonic splines} 

For $\varphi(r) = r^k$ with odd $k \in \N$, the moment of the translated PHS, 
\begin{equation*} 
	m_n = m(x_n,a,b) = \int_a^b \varphi( x - x_n ) \intd x,
\end{equation*} 
is given by 
\begin{equation*} 
	m_n = \frac{1}{k+1} \left[ (a-x_n)^{k+1} + (b-x_n)^{k+1} \right], 
	\quad n=1,2,\dots,N.
\end{equation*} 
For $\varphi(r) = r^k \log r$ with even $k \in \N$, on the other hand, we have 
\begin{equation*} 
	m_n 
		= (x_n - a)^{k+1} \left[ \frac{\log( x_n - a )}{k+1} - \frac{1}{(k+1)^2} \right] 
		+ (b - x_n)^{k+1} \left[ \frac{\log( b - x_n )}{k+1} - \frac{1}{(k+1)^2} \right].
\end{equation*} 
Note that for $x_n = a$ the first term is zero, while for $x_n = b$ the second term is zero.

\subsection{Two-dimensional moments}
\label{sub:app_moments_2d} 

Here, we consider the two-dimensional case, where the domain is given by a rectangular of the form $\Omega = [a,b] \times [c,d]$.

\subsubsection{Gaussian RBF}

For $\varphi(r) = \exp( - \varepsilon^2 r^2 )$, the two-dimensional moments can be written as products of one-dimensional moments. 
In fact, we have 
\begin{equation*} 
	\int_a^b \int_c^d \exp( - \varepsilon^2 \|(x-x_n,y-y_n\|_2^2 ) 
		=  m(\varepsilon,x_n,a,b) \cdot m(\varepsilon,y_n,c,d).
\end{equation*}
Here, the multiplicands on the right-hand side are the one-dimensional moments from \eqref{eq:moment_1d_G}.

\subsubsection{Polyharmonic splines and other RBFs} 

If it is not possible to trace the two-dimensional moments back to the one-dimensional ones, we are in need of another approach. 
This is, for instance, the case for PHS. 
We start by noting that for a data points $(x_n,y_n) \in [a,b] \times [c,d]$ the corresponding moment can be rewritten as follows: 
\begin{equation*} 
	m(x_n,y_n) 
		= \int_{a}^b \int_{c}^d \varphi( \| (x-x_n,y-y_n)^T \|_2 ) \intd y \intd x 
		= \int_{\tilde{a}}^{\tilde{b}} \int_{\tilde{c}}^{\tilde{d}} \varphi( \| (x,y)^T \|_2 ) \intd y \intd x
\end{equation*}
with translated boundaries $\tilde{a} = a - x_n$, $\tilde{b} = b - x_n$, $\tilde{c} = c - y_n$, and $\tilde{d} = d - y_n$. 
We are not aware of an explicit formula for such integrals for most popular RBFs readily available from the literature.  
That said, such formulas were derived in \cite{reeger2016numericalA,reeger2016numericalB,reeger2018numerical} (also see \cite[Chapter 2.3]{watts2016radial}) for the integral of $\varphi$ over a \emph{right triangle} with vertices $(0,0)^T$, $(\alpha,0)^T$, and $(\alpha,\beta)^T$. 
Assuming $\tilde{a} < 0 < \tilde{b}$ and $\tilde{c} < 0 < \tilde{d}$, we therefore partition the shifted domain ${\tilde{\Omega} = [\tilde{a},\tilde{b}] \times [\tilde{c},\tilde{d}]}$ into eight right triangles. 
Denoting the corresponding integrals by $I_1, \dots, I_8$, the moment $m(x_n,y_n)$ correspond to the sum of these integrals. 
The procedure is illustrated in Figure \ref{fig:moments}. 

\begin{figure}[tb]
	\centering 
  	\begin{tikzpicture}[domain = -6.5:6.5, scale=0.7, line width=1.25pt]

			\draw[->,>=stealth] (-4.5,0) -- (7,0) node[below] {$x$};
    			\draw[->,>=stealth] (0,-2.75) -- (0,4.5) node[right] {$y$};
			\draw (-4,0.1) -- (-4,-0.1) node [below] {$\tilde{a}$ \ \ \ \ };
			\draw (6,0.1) -- (6,-0.1) node [below] {\ \ \ \ $\tilde{b}$};
			\draw (-0.1,-2) -- (0.1,-2) node [below] {\ \ $\tilde{c}$};
			\draw (-0.1,3) -- (0.1,3) node [above] {\ \ $\tilde{d}$};

			\draw[blue] (-4,-2) rectangle (6,3);
			
			\draw[red,dashed] (0,0) -- (6,3) {};
			\draw[red,dashed] (0,0) -- (-4,3) {};
			\draw[red,dashed] (0,0) -- (-4,-2) {};
			\draw[red,dashed] (0,0) -- (6,-2) {};
			
			\draw[red] (4.5,1) node {\Large $I_1$};
			\draw[red] (1,2) node {\Large $I_2$};
			\draw[red] (-1,2) node {\Large $I_3$};
			\draw[red] (-3,1) node {\Large $I_4$};
			\draw[red] (-3,-0.5) node {\Large $I_5$};
			\draw[red] (-0.5,-1.15) node {\Large $I_6$};
			\draw[red] (1,-1.25) node {\Large $I_7$};
			\draw[red] (4.5,-0.75) node {\Large $I_8$};

	\end{tikzpicture}
  	\caption{Illustration of how the moments can be computed on a rectangle in two dimensions}
  	\label{fig:moments}
\end{figure}
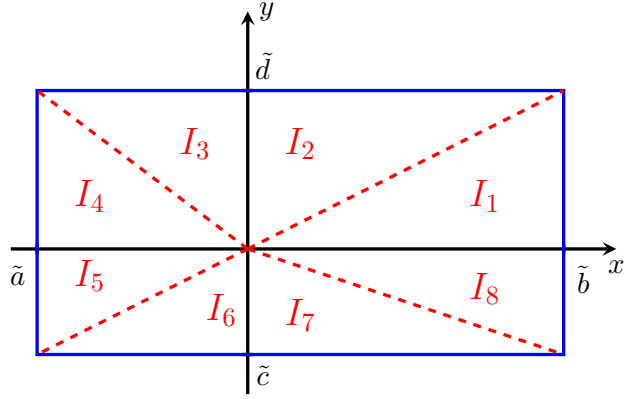

The special cases where one (or two) of the edges of the rectangle align with one of the axes can be treated similarly. 
However, in this case, a smaller subset of the triangles is considered. 
We leave the details to the reader, and note the following formula for the weights: 
\begin{equation*} 
\begin{aligned}
	m(x_n,y_n) 
		& = \left[ 1 - \delta_0\left(\tilde{b} \tilde{d}\right) \right] \left( I_1 + I_2 \right) 
		+ \left[ 1 - \delta_0\left(\tilde{a} \tilde{d}\right) \right] \left( I_3 + I_4 \right) \\
		& + \left[ 1 - \delta_0\left(\tilde{a} \tilde{c}\right) \right] \left( I_5 + I_6 \right) 
		+ \left[ 1 - \delta_0\left(\tilde{b} \tilde{c}\right) \right] \left( I_7 + I_8 \right) 
\end{aligned}
\end{equation*} 
Here, $\delta_0$ denotes the usual Kronecker delta defined as $\delta_0(x) = 1$ if $x = 0$ and $\delta_0(x) = 0$ if $x \neq 0$. 
The above formula holds for general $\tilde{a}$, $\tilde{b}$, $\tilde{c}$, and $\tilde{d}$. 
Note that all the right triangles can be rotated or mirrored in a way that yields a corresponding integral of the form 
\begin{equation}\label{eq:refInt} 
	I_{\text{ref}}(\alpha,\beta) 
		= \int_0^{\alpha} \int_0^{\frac{\beta}{\alpha}x} \varphi( \| (x,y)^T \|_2 ) \intd y \intd x.
\end{equation} 
More precisely, we have 
\begin{equation*}
\begin{alignedat}{4}
	& I_1 = I_{\text{ref}}(\tilde{b},\tilde{d}), \quad 
	&& I_2 = I_{\text{ref}}(\tilde{d},\tilde{b}), \quad  
	&& I_3 = I_{\text{ref}}(\tilde{d},-\tilde{a}), \quad 
	&& I_4 = I_{\text{ref}}(-\tilde{a},\tilde{d}), \\ 
	& I_5 = I_{\text{ref}}(-\tilde{a},-\tilde{c}), \quad 
	&& I_6 = I_{\text{ref}}(-\tilde{c},-\tilde{a}), \quad 
	&& I_7 = I_{\text{ref}}(-\tilde{c},\tilde{b}), \quad 
	&& I_8 = I_{\text{ref}}(\tilde{b},-\tilde{c}).
\end{alignedat} 
\end{equation*}
Finally, explicit formulas of the reference integral $I_{\text{ref}}(\alpha,\beta)$ over the right triangle with vertices $(0,0)^T$, $(\alpha,0)^T$, and $(\alpha,\beta)^T$ for some PHS can be found in Table \ref{tab:moments}. 
Similar formulas are also available, for instance, for Gaussian, multiquadric and inverse multiquadric RBFs. 

\begin{table}[t]
  \centering 
  \renewcommand{\arraystretch}{1.5}
  \begin{tabular}{c|c}
    $\varphi(r)$ & $I_{\text{ref}}(\alpha,\beta)$ \\ 
    \midrule 
    $r^2 \log r$ & $\frac{\alpha}{144} \left[ 24\alpha^3 \arctan\left( \beta/\alpha \right) + 6 \beta (3\alpha^2 + \beta^2) \log( \alpha^2 + \beta^2 ) - 33\alpha^2\beta - 7\beta^3 \right]$ \\ 
    $r^3$ & $\frac{\alpha}{40} \left[ 3 \alpha^4 \arcsinh\left( \beta/\alpha \right) + \beta (5\alpha^2 + 2 \beta^2) \sqrt{ \alpha^2 + \beta^2} \right]$ \\ 
    $r^5$ & $\frac{\alpha}{336} \left[ 15 \alpha^6 \arcsinh\left( \beta/\alpha \right) + \beta (33\alpha^4 + 26\alpha^2\beta^2 + 8 \beta^4) \sqrt{ \alpha^2 + \beta^2} \right]$ \\ 
    $r^7$ & $\frac{\alpha}{3346} \left[ 105 \alpha^8 \arcsinh\left( \beta/\alpha \right) + \beta (279\alpha^6 + 326\alpha^4\beta^2 + 200\alpha^2\beta^4 + 48 \beta^6) \sqrt{ \alpha^2 + \beta^2} \right]$ \\ 
  \end{tabular} 
  \caption{The reference integral $I_{\text{ref}}(\alpha,\beta)$---see \eqref{eq:refInt}---for some PHS}
  \label{tab:moments}
\end{table} 

We note that the approach presented above is similar to the one in \cite{sommariva2006numerical}, where the domain $\Omega = [-1,1]^2$ was considered. 
Later, the same authors extended their findings to simple polygons \cite{sommariva2006meshless} using the Gauss--Grenn theorem.  
Also see the recent work \cite{sommariva2021rbf}, addressing polygonal regions that may be nonconvex or even multiply connected, and references therein. 
It would be of interest to see if these approaches also carry over to computing products of RBFs corresponding to different centers or products of RBFs and their partial derivatives, again corresponding to different centers. 
Such integrals occur as elements of mass and stiffness matrices in numerical PDEs. 
In particular, they are desired to construct linearly energy stable (global) RBF methods for hyperbolic conservation laws \cite{glaubitz2020shock,glaubitz2021stabilizing,glaubitz2021towards}.  

\bibliographystyle{siamplain}
\bibliography{literature}

\end{document}